\documentclass[12pt, a4paper, reqno, english]{amsart}
\usepackage[all]{xy}
\usepackage{amssymb,amsmath,amsthm}
\numberwithin{equation}{section}
\usepackage[margin=1in]{geometry}
\usepackage{comment}

\usepackage[colorlinks=true, linkcolor=blue, citecolor=green, urlcolor=black]{hyperref}

\newcommand{\N}{\mathbb{N}}
\newcommand{\Z}{\mathbb{Z}}

\newtheorem{thm}{Theorem}[section]
\newtheorem{prop}{Proposition}[section]
\newtheorem{lem}{Lemma}[section]
\newtheorem{rem}{Remark}[section]
\newtheorem{cor}{Corollary}[section]

\theoremstyle{definition}

\renewcommand{\mod}[1]{\hspace{-2.9mm}\pmod{#1}}

\newcommand{\ben}{\begin{enumerate}}
\newcommand{\een}{\end{enumerate}}
\newcommand{\eit}{\begin{itemize}}
\newcommand{\beq}{\begin{equation}}
\newcommand{\eeq}{\end{equation}}

\newcommand{\cal}{\mathcal}

\usepackage{color}
\definecolor{red}{rgb}{1,0,0}
\definecolor{blue}{rgb}{.2,.6,.75}
\definecolor{green}{rgb}{.4,.7,.4}

\renewcommand{\leq}{\leqslant}
\renewcommand{\geq}{\geqslant}
\renewcommand{\d}{\mathrm{d}}

\begin{document}

\title{Moments OF THE NUMBER OF REPRESENTATIONS AS SUMS OF TWO PRIME SQUARES}

\author{Haozhe Gou}
\date{}

\address{School of Mathematics, Shandong University, Jinan 250100, China}
\address{D\'epartement de math\'ematiques et de statistique, Universit\'e de Montr\'eal, C.P.~6128, succ.~Centre-ville, Montr\'eal, QC H3C~3J7, Canada}
\email{haozhegou@gmail.com}

\subjclass[2020]{Primary 11N37; Secondary 11P32, 11A25.}
\keywords{Prime squares, representation functions, moments.}

\thanks{}

\begin{abstract}
We prove, for every fixed integer \(k\ge 4\), the correct order of magnitude for the $k$th moments of the function that counts the number of representations of an integer as sums of two prime squares. The upper bound for $k=4$ was previously known up to $\log\log\log x$, and the lower bound for $k\ge 4$ was only known conditionally on a conjectural uniform version of the Green-Tao theorem on linear equations in primes  by the work of Sabuncu \cite{Sabuncu2024}. As an application of our method, we give a simpler proof of the lower bounds for the  moments of the shifted prime divisor function, thereby recovering the lower-bound part of Gabdullin's recent result on a conjecture of Fan and Pomerance. 
\end{abstract}

\maketitle

\newcommand{\cbar}{\overline{\chi}}
\newcommand{\pbar}{\overline{\psi}}
\newcommand{\sumstar}{\sideset\and ^* \to \sum}

\section{Introduction}
Moments of arithmetic representation functions are classical objects in analytic number theory. Generally, such moments contain information about the distribution of the corresponding representation numbers, and are closely related to additive problems involving primes, sieve methods, among others. 
One particularly interesting family is given by representations of an integer as a sum of two squares, with primality restrictions on one or both of the coordinates.

More precisely, let \(\mathcal P\) denote the set of primes, we write
\[
r_0(n)=\#\{(a,b)\in\Z^2:n=a^2+b^2,a\ge 0,b>0\},
\]
\[
r_1(n)=\#\{(a,p)\in\Z\times\mathcal{P}:n=a^2+p^2,a>0\},
\]
and
\[
r_2(n)=\#\{(p,q)\in\mathcal P^2:n=p^2+q^2\}.
\]
The function \(r_0(n)\) is well understood through the classical theory of sums
of two squares. It is well-known that
\[
r_0(n)=\sum_{d\mid n}\chi_4(d),
\]
where \(\chi_4\) is the non-principal character modulo \(4\). Hence \(r_0\) is multiplicative.  

In a broader context, Blomer and Granville \cite{BlomerGranville2006} established uniform asymptotic formulae for the moments of representation numbers of general binary quadratic forms.
When specializing to the form \(x^2+y^2\), we have, for every fixed integer \(k\geq 1\),
\begin{equation}\label{eq:moment r0}
    \sum_{n\leq x} r_0(n)^k
\sim
a_k x(\log x)^{2^{k-1}-1}.
\end{equation}
The leading constant may be written explicitly as
\[
a_k
=\frac{L(1,\chi_4)^{2^{k-1}}}{\Gamma(2^{k-1})}
\left(\frac12\right)^{2^{k-1}-1}
\prod_{p\equiv 1\mod 4}
\left[
\left(1-\frac1p\right)^{2^k}
\sum_{\nu=0}^{\infty}\frac{(\nu+1)^k}{p^\nu}
\right]
\prod_{p\equiv 3\mod 4}
\left(1-\frac1{p^2}\right)^{2^{k-1}-1}.
\]

The behavior of \(r_1(n)\) and \(r_2(n)\) is more subtle than that of
\(r_0(n)\), since neither is multiplicative.  A first basic result follows from the prime number theorem
\begin{align}\label{eqn:first moment}
   \sum_{n\le x} r_1(n)
   =
   \frac{\pi}{2}\frac{x}{\log x}
   +O\!\left(\frac{x}{(\log x)^2}\right),
   \quad
   \sum_{n\le x} r_2(n)
   =
   \pi\frac{x}{(\log x)^2}
   +O\!\left(\frac{x}{(\log x)^3}\right).
\end{align}
The second moment of \(r_1(n)\) was first bounded by Rieger \cite{Rieger1968} and was later
evaluated asymptotically by Daniel \cite{Daniel2001}, who proved
\[
   \sum_{n\le x} r_1(n)^2
   =
   \left(\frac{\pi}{2}+\frac94\right)\frac{x}{\log x}
   +O\!\left(\frac{x(\log\log x)^2}{(\log x)^2}\right).
\]
Here the term \(\pi/2\) comes from the diagonal solutions
\(a^2+p^2=b^2+q^2\), while the term \(9/4\) is genuinely off-diagonal.
More recent work of Granville, Sabuncu and Sedunova \cite{GranvilleSedunovaSabuncu2024} studied the finer mass
distribution of \(r_1\). In particular, they proved that
\[
\#\{n\le x:r_1(n)= 1\}
=
\frac{\pi}{2}\frac{x}{\log x}
-
\frac{x(\log\log x)^{O(1)}}{(\log x)^{1+\delta}},
\]
where
\(\delta=1-\frac{1+\log\log 2}{\log 2}
=0.0860713320\cdots\)
is the multiplication table constant.
They further predicted a precise form of this secondary term involving a
non-constant \(1\)-periodic function \(\psi_1^*\). Remarkably, this is exactly
the same periodic function, up to constant, as the function $g$ appearing in
the recent breakthrough work of Green and Sawhney \cite{GreenSawhney2026} on the problem of estimating the limiting probability that a random permutation fixes a \(k\)-set.


For \(r_2(n)\), the low moments exhibit the prime paucity phenomenon.  Erd\"os \cite{Erdos1938}
proved that the off-diagonal solutions to
\(p_1^2+q_1^2=p_2^2+q_2^2\) are negligible in the second moment, obtaining an asymptotic formula with an error term \(O(x(\log\log x)^5/(\log x)^3)\). Rieger~\cite{Rieger1968} sharpened Erd\"os's estimate for the off-diagonal contribution, which, together with the diagonal contribution, implies that
\[
   \sum_{n\le x}r_2(n)^2
   =
   2\pi\frac{x}{(\log x)^2}
   +O\!\left(\frac{x}{(\log x)^3}\right).
\]
Blomer and Br\"udern~\cite{BlomerBrudern2008} proved an analogous result for the third moment,
\[
\begin{aligned}
       \sum_{n\le x}r_2(n)^3
   =
   4\pi\frac{x}{(\log x)^2}
   +O\!\left(\frac{x(\log\log x)^6}{(\log x)^3}\right),
\end{aligned}
\]
again showing that the diagonal solutions give the main term. They also remarked
that, by incorporating ideas of Rieger, the logarithmic factors in the error term
should be removable, leading to an error term \(O(x/(\log x)^3)\), although they
did not work out the details. Recently, Sabuncu~\cite{Sabuncu2024} gave a  refinement in this direction, obtaining \begin{align}\label{eqn:r2 third moment}   \sum_{n\le x}r_2(n)^3    =    4\pi\frac{x}{(\log x)^2}+O\!\left(\frac{x(\log\log x)^2}{(\log x)^3}\right).\end{align}
We also mention that mixed moments involving \(r_0(n)\), \(r_1(n)\), and \(r_2(n)\)
have been studied by Sedunova \cite{Sedunova2022}.


In this paper, we mainly concern ourselves with the higher moments of $r_2(n)$. This was first studied by  Sabuncu \cite{Sabuncu2024}, who obtained the expected unconditional upper bounds for all $k\ge5$ by refining previous arguments and applying Selberg’s sieve. Conditional on a plausible, conjectural uniform version of the Green--Tao theorem on linear equations in primes (Conjecture 1.5 in \cite{Sabuncu2024}), matching lower bounds are obtained for all $k\ge 4$. As for the upper bound for $k=4$, Sabuncu 
gave 
$$
\sum_{n\le x}r_2(n)^4\ll \frac{x}{\log x}\log\log\log x.
$$
Our main result below removes the  factor  $\log\log\log x$ from the upper bound for $k=4$ and establishes all lower bounds unconditionally.
\begin{thm}\label{thm:main} For every integer $k\ge 4$, we have
    \[
\sum_{n\le x}r_2(n)^k\asymp_k x(\log x)^{2^{k-1}-2k-1}.
\]
\end{thm}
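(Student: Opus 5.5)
The heuristic I start from is this. For $n\le x$, write $r_2(n)$ through the Gaussian factorization of $n$: each representation $n=p^2+q^2$ comes from $n=|\alpha|^2$ with $\alpha=p+iq$, and both coordinates prime should cost a factor $(\log x)^{-2}$. The $k$th moment is then heuristically $\sum_{n\le x} r_0(n)^k(\log x)^{-2k}\approx x(\log x)^{2^{k-1}-1-2k}$ by \eqref{eq:moment r0}. For $k\ge 4$ this exceeds the diagonal contribution $x(\log x)^{-2}$, so the target exponent is the generic one. I will prove the upper and lower bounds separately.

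\textbf{Upper bound.} Expand
\[
\sum_{n\le x} r_2(n)^k=\#\{(p_j,q_j)_{j\le k}: p_1^2+q_1^2=\dots=p_k^2+q_k^2\le x\}.
\]
The key step is to reduce to a sieve problem on Gaussian integers. I group by $n$ and parametrize the $k$ representations of $n$ by factorizations $n=\prod \pi_i\bar\pi_i\cdot m$, where $m$ carries the primes $\equiv 3 \bmod 4$ and the powers of $2$. A $k$-tuple of representations corresponds to $2^{k-1}$ Gaussian divisors $\beta_S$, indexed by subsets up to complement, with $p_j+iq_j=\epsilon_j\prod_S\beta_S^{(\pm)}$. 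The real and imaginary parts of these products must be prime: that gives $2k$ conditions on the variables $\beta_S$.

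Applying Selberg's upper bound sieve in $2^{k-1}$ Gaussian variables, each of norm range $N(\beta_S)$, each prime condition saves one logarithm. This yields a bound of the shape
\[
\sum_{N(\beta_S)} (\log x)^{-2k}\prod_S \frac{1}{\ldots},
\]
which produces the $(\log x)^{2^{k-1}-1}$ count of Gaussian divisor tuples. Uniformity fails when some $\beta_S$ are small, because then the sieve dimension against a short variable loses. I handle this as Sabuncu does for $k\ge5$: impose dyadic ranges on $N(\beta_S)$ and sieve only in the large variables, accepting a loss proportional to the smallness of the others. For $k\ge5$ the slack $2^{k-1}-2k-1\ge 6$ absorbs this. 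For $k=4$ the exponent is $-1$ and there is no slack, which is the source of Sabuncu's $\log\log\log x$. I expect this configuration, where a few $\beta_S$ are tiny, to be the main obstacle.

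To deal with it I would try the following. Fix the small $\beta_S$ as a product $\gamma$, and treat the remaining prime conditions as linear forms in the large Gaussian variables with coefficients depending on $\gamma$. The sieve constant for these forms then involves the singular series $\prod_{\mathfrak p\mid\gamma}(1+O(1/N\mathfrak p))$. Summing over $\gamma$ should give a convergent sum of $\mathfrak S(\gamma)/N(\gamma)^{1+\epsilon}$ type, instead of bounding each singular series by $(\log\log)^{O(1)}$. The point is to average the singular series rather than take its maximum.

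\textbf{Lower bound.} Here I will avoid Green–Tao entirely. Restrict to $n=|\alpha|^2$ with $\alpha$ a product of $2^{k-1}$-ish smooth-scale Gaussian primes $\pi_1,\dots,\pi_r$ of controlled sizes. By Hölder,
\[
\sum_n r_2(n)^k\ \ge\ \frac{\big(\sum_{n\in\mathcal A} r_2(n)\big)^k}{|\mathcal A|^{k-1}},
\]
but this loses, so instead I use a structured lower bound. I choose one large Gaussian prime variable $\pi$ and a fixed small part $\gamma$ with many divisors. Each $n=N(\gamma)N(\pi)$ then has about $2^{\omega(\gamma)}$ candidate representations $\pi\cdot\delta$ with $\delta\mid\gamma$-type. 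I need, for many $\pi$, that $\mathrm{Re}$ and $\mathrm{Im}$ of $\pi\delta_j$ are simultaneously prime for $k$ choices $\delta_1,\dots,\delta_k$. This is $2k$ linear forms in the two coordinates of $\pi$, plus the primality of $N(\pi)$. Since the $\delta_j$ are small and fixed, these are a bounded number of linear forms in two variables. I would get the lower bound of the correct order from a lower bound sieve (a Maynard–Tao type weight, or a counting argument over $\gamma$ that replaces primality of $\pi$ by a sieve-weighted almost-prime condition). Summing over the $\approx(\log x)^{2^{k-1}-1}$-weighted choices of $\gamma$ then gives the right power, provided the estimate is uniform in $\gamma$ up to size $x^{\epsilon}$. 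The main difficulty on this side is obtaining a positive proportion rather than just an upper bound. I would aim to lose only a constant factor by counting $k$-tuples $(\delta_j)$ and using Cauchy–Schwarz on the number of prime-pair hits per $n$.
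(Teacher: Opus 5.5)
\textbf{The lower bound has a genuine gap.} It is exactly the obstruction the paper is built to avoid.

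After fixing $\gamma$ and $\delta_1,\dots,\delta_k$, you need a lower bound of the expected order for the number of $\pi=r+is$ such that all $2k$ forms $\mathrm{Re}(\pi\delta_j)$, $\mathrm{Im}(\pi\delta_j)$ are prime simultaneously. This must hold uniformly for $N(\gamma)$ up to $x^{\epsilon}$, since for bounded $\gamma$ the sum over $\gamma$ cannot produce $(\log x)^{2^{k-1}-1}$. That is a system of $2k\ge 8$ pairwise independent linear forms in two variables, a genuinely Green--Tao-type problem, and sieve tools do not reach it:
\begin{itemize}
\item lower-bound sieves only give almost-primes, because of the parity barrier;
\item Maynard--Tao weights only show that some of the forms are prime, never all of them;
\item relaxing the primality of $N(\pi)$ is beside the point, since $r_2$ needs the coordinates themselves to be prime.
\end{itemize}
Green--Tao--Ziegler treats such systems only with no uniformity in the coefficients. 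The version uniform in coefficients of size $x^{\epsilon}$ is precisely Sabuncu's conjectural hypothesis. So your route does not avoid Green--Tao; it needs an unproved uniform form of it.

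The missing idea is to change the weight in your H\"older step. Instead of H\"older against the indicator of a set, use
\[
\sum_{n\le x} r_2(n)\,r_0(n)^{k-1}\le\Big(\sum_{n\le x}r_2(n)^k\Big)^{1/k}\Big(\sum_{n\le x}r_0(n)^k\Big)^{(k-1)/k}.
\]
Since $\sum_{n\le x}r_0(n)^k\asymp x(\log x)^{2^{k-1}-1}$ is classical, it suffices to prove that the mixed moment is $\gg x(\log x)^{2^{k-1}-3}$. This needs only \emph{one} prime representation. Bound $r_0(p^2+q^2)^{k-1}\ge\sum(2^{k-1}-1)^{\omega(m)}$, summed over squarefree $m\mid p^2+q^2$ with $m\le x^{1/6}$ and all prime factors $\equiv 1\pmod 4$. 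This reduces everything to counting prime pairs with $p\equiv\nu q\pmod m$, where $\nu^2\equiv-1\pmod m$. That is the $k=1$ case of your own system (two independent forms), which Bombieri--Vinogradov handles on average over $m$. The main term $(x/\log^2 x)\sum_m(2^k-2)^{\omega(m)}/\varphi(m)\asymp x(\log x)^{2^{k-1}-3}$ has the right size.

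\textbf{The upper bound misplaces the difficulty.} Sabuncu does not sieve in several Gaussian blocks. He extracts the largest prime factor $p'=P^+(n)=r^2+s^2$ and sieves in $(r,s)$ with $2k+1$ conditions. He also already averages the singular series rather than taking its maximum; that is the role of his condition-separation argument and the divisor saving in his Lemma~3.4.

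His $\log\log\log x$ comes from the range $x^{1/\log\log x}<P^+(n)\le x^{1/3}$. For $P^+(n)\in(e^\ell,e^{\ell+1}]$ the sieve saves only $\ell^{-(2k+1)}$. Meanwhile the representation tuples of $N=n/p'$ with $N\le x/e^\ell$ and $P^+(N)\le e^{\ell+1}$ are counted without any Dickman-type decay. This leaves $\frac{x}{\log x}\sum_\ell \ell^{-1}$ when $k=4$. The paper removes this loss in two steps:
\begin{itemize}
\item it bounds that friable count with the Tenenbaum--Wu friable Selberg--Delange estimate, which keeps the factor $\rho_{2^{k-1}}(u_\ell)$;
\item it takes a geometric mean with Sabuncu's power saving in $d$ for the part of the singular series supported on divisors of $R(\vec m,\vec n)$.
\end{itemize}

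Your proposed remedy is also wrong as stated, for two reasons. The sum over the small blocks must produce $(\log x)^{2^{k-1}-1}$; it does not converge. And the singular series is not supported only on primes dividing $\gamma$: the primes dividing the cross-determinants $\mathrm{Re},\mathrm{Im}(\delta_j\overline{\delta_{j'}})$ are the hard part.

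In your block parametrization you could in fact avoid any medium range. Sieve only in the single largest block, whose norm is at least $(n/2)^{1/2^{k-1}}$; then the $2k$ linear conditions save $(\log x)^{-2k}$ uniformly. But everything would then rest on showing that the singular-series average over the remaining blocks, weighted by the reciprocal of their total norm, is $\ll(\log x)^{2^{k-1}-1}$, including the cross-determinant part. Your sketch does not address that.

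A small slip: for $k=5$ the exponent $2^{k-1}-2k-1$ equals $5$, not $\ge 6$.
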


Concerning the upper bounds, we in fact prove a stronger estimate for the factorial moments for all integers \(k\ge 1\); see Proposition~\ref{prop:factorial-upper}. 
As a consequence, we obtain the following refinement of the third moment, which improves   Sabuncu's result  \eqref{eqn:r2 third moment} and reaches the optimal error term anticipated by Blomer and Br\"udern~\cite{BlomerBrudern2008}.
\begin{cor}\label{cor:third moment}
We have
    \begin{equation*}
   \sum_{n\le x} r_2(n)^3
   =
   4\pi \frac{x}{(\log x)^2}
   +
   O\!\left(\frac{x}{(\log x)^3}\right).
\end{equation*}
\end{cor}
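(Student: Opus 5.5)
We will deduce Corollary~\ref{cor:third moment} from the factorial moment estimate of Proposition~\ref{prop:factorial-upper}, via the standard identity relating powers to falling factorials. Write $(r)_j=r(r-1)\cdots(r-j+1)$. Then
\[
r_2(n)^3=(r_2(n))_3+3(r_2(n))_2+r_2(n).
\]
Summing over $n\le x$ splits the third moment into three pieces. The first moment is given by \eqref{eqn:first moment}: it equals $\pi x/(\log x)^2+O(x/(\log x)^3)$.

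\emph{The main term.} Since $r_2$ counts ordered pairs $(p,q)$ with $p^2+q^2=n$, every $n$ with $r_2(n)>0$ carries the ``trivial'' representations $(p,q)$ and $(q,p)$. These give $(r_2(n))_2\ge r_2(n)$ roughly, with equality when $p\ne q$ and $n$ has no other representation. Accordingly, we split $\sum_{n\le x}(r_2(n))_2$ into two parts:
\begin{itemize}
\item the diagonal pairs $\{(p,q),(q,p)\}$, which contribute $\sum_{n}r_2(n)+O(\sqrt x)=\pi x/(\log x)^2+O(x/(\log x)^3)$ (the $O(\sqrt{x})$ accounts for $p=q$);
\item the genuinely off-diagonal solutions of $p_1^2+q_1^2=p_2^2+q_2^2$ with $\{p_1,q_1\}\ne\{p_2,q_2\}$, which by Rieger's bound (equivalently the case $k=2$ of Proposition~\ref{prop:factorial-upper}) are $O(x/(\log x)^3)$.
\end{itemize}
This recovers the second moment $2\pi x/(\log x)^2+O(x/(\log x)^3)$.

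\emph{The cubic term.} For $(r_2(n))_3$ we count ordered triples of distinct representations of $n$. Any three distinct ordered pairs cannot all lie in a single set $\{(p,q),(q,p)\}$, so at least two of the three representations are off-diagonal relative to each other. The contribution of triples is therefore bounded by the count of $n$ having at least two essentially distinct representations, weighted by $r_2(n)^{O(1)}$. This is exactly the quantity that Proposition~\ref{prop:factorial-upper} controls.

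\emph{Reading off the bound.} The key step, and the main obstacle, is to check that Proposition~\ref{prop:factorial-upper} yields $O(x/(\log x)^3)$ for this ``non-trivial'' part of the cubic factorial moment. Proposition~\ref{prop:factorial-upper} presumably bounds the factorial moments by $x(\log x)^{2^{k-1}-2k-1}$, or more precisely bounds the sum over essentially distinct representations. For $k=3$ this exponent is $-3$, which is exactly what we need. If the proposition is stated for ordered $k$-tuples of pairwise essentially distinct representations (unordered pairs $\{p,q\}$), we convert back using $r_2(n)=2\rho(n)+O(1_{n=2p^2})$. Here $\rho(n)$ is the number of unordered representations, and $(r_2)_3$ expands into $8(\rho)_3+O((\rho)_2)$ plus terms supported on $n=2p^2$. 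Both $(\rho)_2$ and $(\rho)_3$ are then $O(x/(\log x)^3)$, by the proposition with $k=2,3$.

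Combining, we obtain $\pi+3\pi=4\pi$ as the constant of the main term, with total error $O(x/(\log x)^3)$.
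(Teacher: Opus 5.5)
Your proof is correct and is essentially the paper's argument. Your decomposition amounts to $(r_2)_2=4(\rho)_2+2\rho$ and $(r_2)_3=8(\rho)_3+12(\rho)_2$, so $r_2^3=8(\rho)_3+24(\rho)_2+8\rho$; this is exactly the paper's identity $\sum_{n\le x} r_2(n)^3=8\mathcal S_1(x)+24\mathcal S_2(x)+8\mathcal S_3(x)+O(\sqrt x/\log x)$, after which the first moment gives $4\pi x/(\log x)^2$ and Proposition~\ref{prop:factorial-upper} with $k=2,3$ bounds the rest. The one point to tidy is that the proposition is stated for $R_2$, which excludes the prime $2$: $\rho-R_2$ is supported on the $O(\sqrt x)$ integers $n=4+q^2$, and together with the $n=2p^2$ corrections this contributes only $O(x^{1/2+\varepsilon})$.
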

We briefly indicate the main new ideas for proving Theorem~\ref{thm:main}.
For the lower bound, we do not follow Sabuncu's lower-bound argument, nor do we
attempt to prove the corresponding conjectural uniform version of the Green--Tao theorem.  Instead, we take a different route: we replace the difficult pure
moment of \(r_2(n)\) by a mixed moment in which one prime-square representation
is kept and the remaining multiplicity is supplied by the classical representation function \(r_0(n)\).  
This turns the lower bound into a problem about congruence conditions for a single pair of primes, accessible unconditionally by primes in arithmetic progressions on average.

For the upper bound, we follow Sabuncu's Gaussian factorisation argument and Selberg
sieve framework, including the decomposition according to the largest prime factor.  The new ingredient is the use of a friable
Selberg--Delange estimate of Tenenbaum and Wu~\cite{TenenbaumWu2003} in the medium prime range,
which preserves a Dickman-type decay factor and removes the harmonic loss
responsible for the extra \(\log\log\log x\) in the fourth moment.

We also point out that our method for proving the lower bounds in Theorem \ref{thm:main} is not limited to \(r_2(n)\); it is likely applicable to a wider class of functions. 
We illustrate this with two further examples.

One natural example is the function \(r_1(n)\) introduced earlier. Combined with the lower-bound strategy here and the sieve framework of Sabuncu~\cite{Sabuncu2024}, with some minor modifications, we should obtain, for every integer $k\ge3$,
\begin{align}
    \sum_{n\le x} r_1(n)^k
    \asymp_k
    x(\log x)^{2^{k-1}-k-1}.
\end{align}

A second example is the shifted-prime divisor function
\[
    \omega^*(n)
    :=
    \#\{p\in\mathcal P : p-1\mid n\}.
\]
Gabdullin~\cite{Gabdullin2025} recently proved the conjecture of Fan 
and Pomerance \cite{FanPomerance2024} that, for every fixed integer \(k\ge 2\),
\begin{align}\label{eqn:omega*}
    \sum_{n\le x}\omega^*(n)^k
    \asymp_k
    x(\log x)^{2^k-k-1}.\end{align}
His proof of the lower bound analyzes prime tuples through the least common multiple \([p_1-1,\ldots,p_k-1]\), using a combinatorial identity for the least common multiple in terms of gcds, together with lower bounds for primes in arithmetic progressions and the treatment of possible exceptional zeros. As an application of our method, we will give a simpler proof and hence recover Gabdullin's result for the lower bound in \eqref{eqn:omega*}. 

The rest of this paper is organized as follows.  
In Section~\ref{sec:strategy} we outline the general strategy for both the 
lower and the upper bounds for the moments of \(r_2(n)\), and carry out 
an initial reduction.  
Section~\ref{sec:lem-for-lower} collects several lemmas needed for the 
lower bound.  
In Section~\ref{sec:pf-main-lower} we prove \eqref{eq:Mixed low bound} 
and then deduce the desired lower bound in Theorem~\ref{thm:main}.  
The auxiliary results and lemmas required for the upper bound are 
gathered in Sections~\ref{sec:friable-SD} and ~\ref{sec:upper-aux}.  
The proof of the upper bound in Theorem~\ref{thm:main} is given in 
Section~\ref{sec:proof-main-upper}.  
Finally, in Section~\ref{sec:omega-star-lower} we briefly explain the 
alternative proof of the lower bound for \(\omega^*(n)\).

\subsection*{Acknowledgment.}
The author would like to express his sincere gratitude to his supervisors, Professors Andrew Granville and Jianya Liu, for their constant support, encouragement and many valuable discussions and comments. The author would also like to thank the Centre de recherches math\'ematiques (CRM) for organizing the thematic program \emph{Universal Statistics in Number Theory}, which provided an excellent environment for learning about recent developments and for mathematical exchange. Some of the ideas in this work were inspired during this period. The author is also grateful to Cihan Sabuncu and Max Wenqiang Xu for helpful discussions and for carefully reading an earlier version of the manuscript and providing many detailed comments and suggestions, which improved the exposition of the paper. Finally, he gratefully acknowledges financial support from CSC.

\section{Proof Strategy and First Reductions}
\label{sec:strategy}
The moments of \(r_2(n)\) require an understanding of solutions to systems
\[
p_1^2+q_1^2=\cdots=p_k^2+q_k^2.
\]
To avoid the exceptional local behaviour caused by the even prime and same prime pairs, we let \(\cal P_{>2}\) denote the set of odd primes, and define
\[
    R_2(n) := \#\{(p, q)\in \cal P_{>2}^2:\ p < q,\ p^2+q^2 = n\}.
\]
This differs slightly from the function denoted by \(R_2(n)\) in
\cite{Sabuncu2024}, where the even prime 2 is allowed. However, this does not affect any of the central arguments or final conclusions. With our convention,
\begin{equation*}
    r_2(n) = 2R_2(n) + \mathbf{1}_{n\in 2\cal P^2} + 2\cdot \mathbf{1}_{n-4 \in \cal P_{>2}^2}.
    \label{eq:r2-R2-relation}
\end{equation*}
The indicator terms contribute only \(O_k(\sqrt x/\log x)\) to the $k$th moment, which is negligible.  Thus we have
\begin{equation}\label{eqn:R2(n)}
    \sum_{n\le x}r_2(n)^k=2^k\sum_{n\le x}R_2(n)^k+O_k\left(x^{1/2+\varepsilon}\right)
\end{equation}

\subsection{The Lower Bound.}
The direct approach to the lower bound would be to count many simultaneous
solutions of \(    p_1^2+q_1^2=\cdots=p_k^2+q_k^2
\) with all \(p_i,q_i\) prime.  After usual Gaussian factorisation this leads to a
system of affine-linear forms required to be prime simultaneously, and
Sabuncu's lower-bound argument invokes a generalized Green--Tao type theorem
to handle such systems.  

Our key point is to avoid estimating the pure moment of $R_2(n)$ directly: we keep only one
prime-square representation and use powers of \(r_0(n)\) to supply the
remaining multiplicity. That is, for a fixed integer $k\ge 2$, we consider the mixed moment
\begin{equation*}
    \mathcal{M}_k(x):=\sum_{n\le x} R_2(n)r_0(n)^{k-1}.
\end{equation*}
By H\"older's inequality, we have
\[
    \mathcal{M}_k(x)
    \le
    \left(\sum_{n\le x}R_2(n)^k\right)^{1/k}
    \left(\sum_{n\le x}r_0(n)^k\right)^{(k-1)/k}.
\]
Hence
\[
    \sum_{n\le x}R_2(n)^k
    \ge
    \frac{\mathcal{M}_k(x)^k}
    {\left(\sum_{n\le x}r_0(n)^k\right)^{k-1}}.
\]
Using the known moment estimate of $r_0(n)$
from \eqref{eq:moment r0}, we see that it is enough to prove
\begin{equation}\label{eq:Mixed low bound}
    \mathcal{M}_k(x)\gg_k x(\log x)^{2^{k-1}-3}.
\end{equation}
Indeed, \eqref{eq:Mixed low bound} would imply
\[
    \sum_{n\le x}R_2(n)^k
    \gg_k
    \frac{x^k(\log x)^{k(2^{k-1}-3)}}
    {\left(x(\log x)^{2^{k-1}-1}\right)^{k-1}}
    =
    x(\log x)^{2^{k-1}-2k-1},
\]
which is precisely the desired lower bound in \eqref{eqn:R2(n)}.

We now only need to consider one  representation $n=p^2+q^2$, while the remaining factor $r_0(n)^{k-1}$ can be exploited through the divisor identity
\[
    r_0(n)=\sum_{d\mid n}\chi_4(d).
\]
Squarefree divisors composed of primes $\ell\equiv1\pmod4$ give a positive contribution. Thus it remains to count prime pairs $(p,q)$ satisfying congruences of the form
\[
    m\mid p^2+q^2.
\]
After restricting $m$ to a suitable range below the Bombieri--Vinogradov level, this count is accessible unconditionally by primes in arithmetic progressions on average. The resulting Euler product gives the factor $(\log x)^{2^{k-1}-1}$, and the two prime variables contribute the factor $(\log x)^{-2}$, yielding the desired lower bound for $\mathcal{M}_k(x)$.


\subsection{The Upper Bound}
Our proof of the upper bound follows the general framework developed by
Sabuncu \cite{Sabuncu2024}, with one additional input in the medium
prime range.  
As in \cite{Sabuncu2024}, we begin by writing the \(k\)-th moment as a linear combination of factorial moments via the Stirling numbers of the second kind
\begin{equation}\label{eqn:R2-Sj-relation}
       \sum_{n\le x}R_2(n)^k
       =
       \sum_{j=1}^k
       \left\{\begin{matrix}k\\ j\end{matrix}\right\}
       \mathcal{S}_j(x),
\end{equation}
where
\begin{equation}
    \mathcal{S}_j(x):=\sum_{n\le x}R_2(n)(R_2(n)-1)\cdots(R_2(n)-j+1).
\end{equation}

Therefore the upper bound for the \(k\)-th moment of \(r_2(n)\) is controlled by the largest of \(\mathcal S_j(x)\), \(1\le j\le k\). Our goal is the following.
\begin{prop}
\label{prop:factorial-upper}
Let \(k\ge 1\) be a fixed integer. Then for all sufficiently large \(x\), we have
\[
    \mathcal{S}_k(x) \ll_k x(\log x)^{2^{k-1}-2k-1}.
\]
\end{prop}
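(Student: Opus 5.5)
The plan is to follow Sabuncu's Gaussian factorisation argument and add one new ingredient in the medium prime range. The factorial moment $\mathcal S_k(x)$ counts $n\le x$ together with ordered $k$-tuples of \emph{distinct} representations $n=p_i^2+q_i^2$, $p_i<q_i$ odd primes. For $k=1$ the bound is \eqref{eqn:first moment}. For $k\ge2$, set $\pi_i=p_i+iq_i\in\mathbb Z[i]$. Then $n=N(\pi_i)$ for each $i$. Since all $\pi_i$ have the same norm, the tuple is determined by a factorisation of $n$ into Gaussian primes. Concretely, $n=\prod_\ell \ell^{a_\ell}$ with $\ell\equiv1\pmod4$ (primes $3\bmod 4$ appear to even powers, and the power of $2$ is exactly $2$ since $p,q$ are odd). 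Each $\pi_i$ is a product of units and choices between $\lambda$ and $\bar\lambda$ for $\lambda\mid\ell$. Following Sabuncu, for each of the $2^{k-1}$ sign patterns I would group the Gaussian primes into products $\alpha_S$, indexed by subsets $S\subseteq\{2,\dots,k\}$. Here $\alpha_S$ collects the primes that appear conjugated in exactly the $\pi_i$ with $i\in S$, relative to $\pi_1$. This writes
\[
\pi_i=\varepsilon_i\prod_{S\not\ni i}\alpha_S\prod_{S\ni i}\overline{\alpha_S}.
\]
So $\mathcal S_k(x)$ is bounded by the number of tuples $(\alpha_S)_S$ of Gaussian integers with $\prod N(\alpha_S)\le x$ such that the $2k$ real and imaginary parts of the $\pi_i$ are all prime. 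Distinctness of the representations forces at least two of the $\alpha_S$ to be non-unit, so the configuration is non-degenerate.

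Next I would decompose according to the largest norm among the $\alpha_S$, or more precisely according to the largest prime factor $P^+(n)$. I would write $n=mP$, where $P$ is the largest Gaussian prime norm and $m$ is $P$-friable. Fixing all $\alpha_S$ except the one containing the largest prime $\lambda$, the $2k$ coordinates become linear forms in $\lambda=u+iv$, with coefficients determined by the fixed part. Selberg's upper-bound sieve in two variables, for $u+iv$ prime together with the $2k$ linear forms prime, gives roughly
\[
\frac{x}{m(\log(x/m))^{2k+1}}\,\mathfrak S(m),
\]
where $\mathfrak S(m)$ is a singular series. Its local factors at primes $\ell\mid m$ contribute $(1+O(1/\ell))$, up to multiplicative factors bounded in terms of $\omega(m)$. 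Summing over $m$, the number of ways to distribute the prime factors of $m$ among the $2^{k-1}$ classes contributes a weight $\tau_{2^{k-1}}$-like function. So the problem reduces to bounding
\[
\sum_{m}\frac{2^{(k-1)\Omega'(m)}\mathfrak S(m)}{m\,(\log(x/m))^{2k+1}},
\]
with $m$ restricted to be $P$-friable, where $P\approx x/m$.

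In the ranges $x/m\ge x^{\delta}$ and $m$ small, a crude Rankin or Shiu bound already gives $x(\log x)^{2^{k-1}-2k-1}$. The remaining ranges are where $m$ is large but $P^+(m)\le x/m$ is medium-sized. Here a plain harmonic bound over the friability parameter loses a factor like $\log\log\log x$ (for $k=4$) or worse. This medium range is the main obstacle. I would handle it by splitting dyadically in $y=x/m$ and invoking the friable Selberg--Delange estimate of Tenenbaum--Wu for
\[
\sum_{m\le X,\ P^+(m)\le y} z^{\Omega(m)}\mathbf 1_{\ell\mid m\Rightarrow \ell\equiv1(4)}\,g(m),
\]
with $z=2^{k-1}$. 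This gives $X(\log X)^{z/2-1}\rho_z(u)$ with $u=\log X/\log y$, where the Dickman-type factor $\rho_z(u)$ decays like $u^{-u}$. Inserting this into the dyadic sum over $y$, the decay of $\rho_z(u)$ kills the sum over $u$ that previously produced the extra $\log\log\log x$. The outcome is $\ll x(\log x)^{2^{k-1}-2k-1}$ uniformly. I also expect that the small cases $k\le3$, where $2^{k-1}-2k-1<-2$, need the alternative that the diagonal-free count is small; there the same sieve bound directly gives the stated exponent.
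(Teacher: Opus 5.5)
Your proposal has a genuine gap: the singular series does not depend on $m=n/P^+(n)$ alone. You assert that once the cofactor is fixed, the sieve gives $\mathfrak S(m)$ with local factors $1+O(1/\ell)$ only at $\ell\mid m$. That is false.

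Write the fixed cofactors as $(\vec m,\vec n)$ with $N=m_i^2+n_i^2$, as in Sabuncu's normalisation. Two of the $2k$ linear forms $m_ir-n_is$, $n_ir+m_is$ can also become proportional modulo primes $p\nmid N$, namely those dividing $R(\vec m,\vec n)=\prod_{i<j}(m_in_j-m_jn_i)(m_im_j+n_in_j)$. At such $p$, $\nu_p$ drops by a multiple of $p-1$ and the local factor is inflated by $1+O_k(1/p)$. What one actually has is
$\mathfrak S_k(\vec m,\vec n)\ll_k\sum_{d\mid N}\mu^2(d)(4k)^{\omega(d)}/d+\sum_{d\mid R,\,(d,N)=1}\mu^2(d)(4k)^{\omega(d)}/d$.
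The second sum depends on the whole configuration, not on $N$ or $m$. Bounded pointwise it costs a factor $(\log\log x)^{O_k(1)}$, which destroys the sharp order. It has to be averaged: interchange summations and count base tuples with $d\mid R(\vec m,\vec n)$ with a power saving in $d$ (Sabuncu's condition-separation argument). This is needed already in the large-prime range, so ``a crude Rankin or Shiu bound'' does not suffice there either.

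In the medium range, this $R$-part is exactly what your Tenenbaum--Wu step does not cover. Applying the friable Selberg--Delange estimate with weight $r_0(N)^k\prod_{p\mid N}(1+4k/p)$ handles the part supported on $N$, and that matches the paper. For the $R$-part, you need a bound that \emph{simultaneously} saves a power of $d$, so the $d$-sum converges, and keeps decay in $u_\ell$. Without the decay, the $R$-part alone contributes $x\sum_{\log y\le\ell\le\frac13\log x}\ell^{2^{k-1}-2k-2}$, which reintroduces a $\log\log\log x$ or $\log\log x$-power loss for $k\le4$.

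The paper gets this by taking the geometric mean of two bounds:
\begin{itemize}
\item Sabuncu's condition-separated bound $\frac{x}{e^\ell}\ell^{2^{k-1}-1}\tau_K(d)d^{-1/(32K)}$, which has no decay in $u_\ell$;
\item the Tenenbaum--Wu bound $\frac{x}{e^\ell}\ell^{2^{k-1}-1}\rho_{2^{k-1}}(u_\ell)$ after dropping $d\mid R$, which has no saving in $d$.
\end{itemize}
The result is $\rho_{2^{k-1}}(u_\ell)^{1/2}\tau_K(d)^{1/2}d^{-1/(64K)}$, which is summable in $d$ and still rapidly decaying. Some device of this kind is the missing idea.

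Three smaller points:
\begin{itemize}
\item Each prime $\ell\equiv1\pmod4$ admits $2^k$ Gaussian configurations, not $2^{k-1}$; you dropped the choice of $\lambda$ versus $\bar\lambda$ in $\pi_1$. So the dimension is $\kappa=2^{k-1}$ and the mean value is $X(\log y)^{2^{k-1}-1}\rho_{2^{k-1}}(u)$. Your weight $z^{\Omega}$ with $z=2^{k-1}$ on primes $\equiv1\pmod4$ has $\kappa=2^{k-2}$, undercounts, and would give the wrong exponent.
\item Tenenbaum--Wu requires $u\le\exp\{(\log y)^{3/5-\varepsilon}\}$, so very small $y$ must be removed first. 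The paper discards $P^+(n)\le x^{1/\log\log x}$ using $r_2\le r_0$.
\item No separate argument is needed for $k\le3$.
\end{itemize}
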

Assuming Proposition \ref{prop:factorial-upper}, the upper bound in Theorem \ref{thm:main} follows immediately by \eqref{eqn:R2-Sj-relation} and \eqref{eqn:R2(n)}. Indeed, for \(k \ge 4\), the exponent \(2^{k-1}-2k-1\) is a strictly increasing function of \(k\). The cases \(k=2,3\) will be
used below to derive Corollary \ref{cor:third moment}.

\begin{proof}[Proof of Corollary \ref{cor:third moment} assuming Proposition \ref{prop:factorial-upper}]
It follows from \eqref{eqn:R2(n)} and \eqref{eqn:R2-Sj-relation} that
\[
   \sum_{n\le x}r_2(n)^3
   =
   8\mathcal S_1(x)+24\mathcal S_2(x)+8\mathcal S_3(x)
   +O\!\left(\frac{\sqrt x}{\log x}\right).
\]
By the first moment estimate \eqref{eqn:first moment} and  Proposition~\ref{prop:factorial-upper} with \(k=2, 3\) we obtain the result as claimed.
\end{proof}

The remainder of our upper bound argument is devoted to establishing Proposition \ref{prop:factorial-upper}. The case
\(k=1\) is just the first moment estimate, so we focus on \(k\ge 2\).  

We proceed by recalling the foundational sieve setup developed in \cite[Section~4]{Sabuncu2024}.
Let \((\vec p,\vec q)\) be an ordered \(k\)-tuple counted by
\(\mathcal S_k(x)\), and write
\[
    n=p_i^2+q_i^2\qquad (1\le i\le k).
\]
Put \(p' = P^+(n)\) and write \(n = p' N\). Since \(p_i\) and \(q_i\) are distinct odd primes, all  prime divisors of \(n\) are 2 or congruent to \(1 \pmod 4\). In particular, we may write
\[
    p'=r^2+s^2
\]
for some integers \(r,s\).
By factoring \(n\) over the Gaussian integers \(\mathbb{Z}[i]\), we may write
\[
    N=m_i^2+n_i^2,\quad
    p_i=m_ir-n_is,\quad q_i=n_ir+m_is
    \quad (1\le i\le k).
\]
Thus, once the components \((\vec m,\vec n)=(m_i,n_i)_{1\le i\le k}\) are fixed, the remaining task is to count pairs \((r,s)\) for which the quadratic form \(r^2+s^2\) and the \(2k\) linear forms \(p_i,q_i\) are simultaneously prime.

Let \(f_k(z; \vec{m}, \vec{n})\) denote the number of such pairs \((r,s)\) with \(r^2+s^2 \le z\). 
By Lemma 4.2 of Sabuncu \cite{Sabuncu2024}, an application of the
Selberg sieve to these \(2k+1\) prime conditions gives
\[
    f_k(z;\vec m,\vec n)
    \ll_k
    \mathfrak S(\vec m,\vec n)\frac{z}{(\log z)^{2k+1}},
\]
where \(\mathfrak{S}(\vec m,\vec n)\) is the corresponding singular series; see \eqref{eqn:singular-series} below for precise definition. 
By \cite[(4.10)]{Sabuncu2024}, the singular series satisfies
\begin{equation}\label{eqn:singular-series-split}
    \mathfrak S(\vec m,\vec n)
    \ll_k
    \sum_{d_1\mid N}
        \frac{\mu^2(d_1)(4k)^{\omega(d_1)}}{d_1}
    +
    \sum_{\substack{d_2\mid R(\vec m,\vec n)\\ (d_2,N)=1}}
        \frac{\mu^2(d_2)(4k)^{\omega(d_2)}}{d_2},
\end{equation}
where
\[
    R(\vec m,\vec n)
    :=
    \prod_{1\le i<j\le k}
    (m_in_j-m_jn_i)(m_im_j+n_in_j).
\]

We then split the tuples counted by \(\mathcal S_k(x)\) according to the size of
the largest prime factor \(p'=P^+(n)\).
For suitable $y$,
we distinguish the smooth range
\(P^+(n)\le y\), the medium range \(y<P^+(n)\le x^{1/3}\), and the large range
\(P^+(n)>x^{1/3}\).
The smooth and large parts of Sabuncu's argument are left unchanged. 

The only delicate part at which we modify the argument is the treatment of the medium range. In \cite{Sabuncu2024}, the medium range was decomposed \(e\)-adically according to 
\[
e^\ell<P^+(n)\le e^{\ell+1}, \quad \log y\le \ell\le \frac13\log x . 
\] 
After the largest prime factor has been extracted, the Selberg sieve reduces the contribution of this interval to an average of the singular series over tuples \((\vec m,\vec n)\) with 
\[ N\le {x}/{e^\ell}, \quad P^+(N)\le e^{\ell+1}. \] 
By \eqref{eqn:singular-series-split}, the singular series is then split into the part supported on divisors of \(N\) and the part supported on divisors of \(R(\vec m,\vec n)\). The former is handled by divisor-sum mean-value estimates, while the latter is treated by the condition-separation argument, and the main estimate used in this stage is Lemma~3.4 of \cite{Sabuncu2024}, which gives
\begin{align} \label{lem:sabuncu-lemma-3-4} 
\sum_{\substack{a^2+b^2\le x\\ d\mid a\\ P^+(a^2+b^2)\le y}} r_0(a^2+b^2)^k \ll_h \frac{x}{\log x} (\log y)^{2^k} d^{-1/2}, 
\end{align}
for large $y\le x$ and \(d\in\mathbb N\).  This leads, in effect, to a medium-range bound of the form \begin{align}\label{eqn:old-medium}
\frac{x}{\log x} \sum_{\log y\le \ell\le \log x/3} \ell^{2^{k-1}-2k-1}. 
\end{align}
This is already of the expected order for \(k\ge5\), since the exponent \(2^{k-1}-2k-1\) is then positive. For \(k=4\), the exponent is \(-1\), and one obtains an extra factor \(\log\log\log x\). For \(k=2,3\), the same treatment leaves a \((\log\log x)^2\)-loss. 

The strength of \eqref{lem:sabuncu-lemma-3-4} is that it fully exploits the
divisibility condition \(d\mid a\), giving the uniform saving \(d^{-1/2}\).
Our improvement comes from making more use of the additional smooth condition. More precisely, uniformly in the medium range, we apply the friable Selberg--Delange estimate of Tenenbaum and Wu~\cite{TenenbaumWu2003} (see Theorem \ref{lem:TW-cor23}).  
Combining this with \eqref{lem:sabuncu-lemma-3-4} allows us to retain both the Dickman function decay  and the  \(d^{-\eta}\)-saving for some positive exponent $\eta$ which is enough for our purposes. As a result, the medium range is bounded by
\[
x \sum_{\log y\le \ell\le \frac13\log x} \ell^{2^{k-1}-2k-2}\Phi_k(u_\ell), 
\] 
for some function \(\Phi_k(u)\) decays rapidly as \(u\to\infty\). This should be compared with the Sabuncu's bound \eqref{eqn:old-medium}, in which no Dickman-type factor was present.
The precise estimates and their use in the proof of the upper bound are given in Sections~\ref{sec:friable-SD}, \ref{sec:upper-aux} and \ref{sec:proof-main-upper}.

\section{Some Lemmas for Lower Bounds}\label{sec:lem-for-lower}
In this section we present three lemmas used in the proof of the lower bound. 
Throughout this section, we let \(P\) be large and write
\[
    \Pi:=\#(\mathcal P\cap I),
\]
with $I=\left[ P/4, P/3\right]$. Let \(m\le P^{1/3}\) be squarefree and assume that every prime divisor of \(m\) is congruent to \(1\pmod 4\). Define
\[
    N_m
    :=
    \#\{(p,q)\in(\mathcal P\cap I)^2:m\mid p^2+q^2\}.
\]
For \((a,m)=1\), let $\pi_I(a; m)$ denote the number of primes in $I$ congruent to $a \pmod m$,
and define
\[
    E(m):=\max_{(a,m)=1}|E_a(m)|:=
    \max_{(a,m)=1}
    \left|
        \pi_I(a;m)-\frac{\Pi}{\varphi(m)}
    \right|.
\]
\begin{lem}[Counting prime pairs with \(m\mid p^2+q^2\)]\label{lem:Nm-count} 
We have
\[
    N_m
    =
    2^{\omega(m)}\frac{\Pi^2}{\varphi(m)}
    +
    O\bigl(2^{\omega(m)}\Pi E(m)\bigr),
\]
where the implied constant is absolute.
\end{lem}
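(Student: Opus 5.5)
We count by fixing $p$ and counting the admissible residue classes of $q$ modulo $m$. Since every prime of $I$ exceeds $P/4 > P^{1/3}\ge m$, no such prime divides $m$, so $(p,m)=(q,m)=1$ for all $p,q\in\mathcal P\cap I$. For a fixed prime $p\in I$, the condition $m\mid p^2+q^2$ means $q^2\equiv -p^2\pmod m$, i.e.\ $(q\bar p)^2\equiv -1\pmod m$, where $\bar p$ denotes the inverse of $p$ modulo $m$.

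The first step is to count the solutions of $t^2\equiv-1\pmod m$. Because $m$ is squarefree and every prime divisor $\ell$ of $m$ satisfies $\ell\equiv1\pmod 4$, the congruence $t^2\equiv-1\pmod \ell$ has exactly two solutions. By the Chinese remainder theorem, $t^2\equiv -1\pmod m$ has exactly $2^{\omega(m)}$ solutions $t_1,\dots,t_{2^{\omega(m)}}$, and each of them is coprime to $m$. Hence, for fixed $p$, the admissible $q$ are exactly the primes of $I$ lying in one of the $2^{\omega(m)}$ reduced classes $a_j\equiv t_jp\pmod m$, and these classes are distinct.

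Next we sum over $p$. We obtain
\[
N_m=\sum_{p\in\mathcal P\cap I}\sum_{j=1}^{2^{\omega(m)}}\pi_I(t_jp;m).
\]
By the definition of $E(m)$, each term satisfies $\pi_I(a_j;m)=\Pi/\varphi(m)+O(E(m))$, with implied constant $1$. Summing over the $2^{\omega(m)}$ classes and then over the $\Pi$ primes $p$ gives
\[
N_m=2^{\omega(m)}\frac{\Pi^2}{\varphi(m)}+O\bigl(2^{\omega(m)}\Pi E(m)\bigr),
\]
with an absolute implied constant.

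There is no serious obstacle in this argument. The only points requiring care are that $p\nmid m$ and $q\nmid m$, which holds because primes in $I$ exceed $m$ (this also makes the multiplicative inverse $\bar p$ available), and the exact root count $2^{\omega(m)}$, which uses both that $m$ is squarefree and that all its prime factors are $\equiv1\pmod4$.
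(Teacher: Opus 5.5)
Your proof is correct and is essentially the paper's argument. The paper writes $N_m=\sum_{\nu}\sum_{b}\pi_I(b;m)\pi_I(\nu b;m)$, expands both factors, and uses $\sum_b E_b(m)=0$ to remove the cross terms; you instead fix $p$ and approximate only the $q$-count, which skips that cancellation step and gives the error term with implied constant $1$ rather than $2$.
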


\begin{proof}
Since \(m<P/4\), every prime \(p\in I\) is coprime to \(m\). Hence for \(p,q\in\mathcal P\cap I\), the condition
\(
    m\mid p^2+q^2
\)
is equivalent to
\(
    p\equiv \nu q\pmod m
\)
for some \(\nu\pmod m\) satisfying $\nu^2\equiv-1\pmod m$, the number of such $\nu$ is $2^{\omega(m)}$. Consequently,
\[
    N_m
    =
    \sum_{\substack{\nu\bmod m\\ \nu^2\equiv -1\mod m}}
    \sum_{b\in(\mathbb Z/m\mathbb Z)^\times}
    \pi_I(b;m)\pi_I(\nu b;m).
\]
Thus, for each fixed \(\nu\),
\[
\begin{aligned}
    \sum_{b\in(\mathbb Z/m\mathbb Z)^\times}
    \pi_I(b;m)\pi_I(\nu b;m)
    &=
    \sum_{b}
    \left(\frac{\Pi}{\varphi(m)}+E_b(m)\right)
    \left(\frac{\Pi}{\varphi(m)}+E_{\nu b}(m)\right) \\
    &=
    \frac{\Pi^2}{\varphi(m)}
    +
    O(\Pi E(m)),
\end{aligned}
\]
where the cross terms vanish because
\(
    \sum_b E_b(m)=\sum_b E_{\nu b}(m)=0.
\)
Moreover,
\[
\begin{aligned}
    \left|\sum_b E_b(m)E_{\nu b}(m)\right|
    &\le
    E(m)\sum_b |E_b(m)| \le
    2\Pi E(m).
\end{aligned}
\]
Summing over \(\nu\) proves this lemma.
\end{proof}

The following lemma is a divisor-weighted version of the Bombieri--Vinogradov theorem. 
It is a standard consequence of the Bombieri--Vinogradov theorem, but we include the proof for completeness.

\begin{lem}\label{lem:weighted B-V}
For any given constants $A, B > 0$, we have
\[
\sum_{m \le P^{1/3}} \tau(m)^B \max_{(a,m)=1} \left| \pi_I(a; m) - \frac{\Pi}{\varphi(m)} \right| \ll_{A,B} \frac{P}{(\log P)^A},
\]
where $\tau(m)$ is the divisor function.
\end{lem}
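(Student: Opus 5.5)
The plan is the standard Cauchy--Schwarz trick: separate the divisor weight from the error term, so that the weight is paid for by a trivial bound and the error is controlled by the unweighted Bombieri--Vinogradov theorem with a larger value of \(A\).

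\textbf{Step 1: trivial bound for each term.} Write \(E(m)\) as in the definition above. For every \(m\le P^{1/3}\),
\[
E(m)\le \max_{(a,m)=1}\pi_I(a;m)+\frac{\Pi}{\varphi(m)}\ll \frac{P}{m}+\frac{P}{\varphi(m)}\ll \frac{P\log\log P}{m}.
\]
Counting integers in \(I\) in a residue class gives \(\pi_I(a;m)\le P/m+1\). Alternatively, Brun--Titchmarsh gives the sharper \(P/(\varphi(m)\log(P/m))\), though we will not need it.

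\textbf{Step 2: Cauchy--Schwarz.} Writing \(\tau(m)^BE(m)=\tau(m)^BE(m)^{1/2}\cdot E(m)^{1/2}\), we get
\[
\sum_{m\le P^{1/3}}\tau(m)^BE(m)\le\Bigl(\sum_{m\le P^{1/3}}\tau(m)^{2B}E(m)\Bigr)^{1/2}\Bigl(\sum_{m\le P^{1/3}}E(m)\Bigr)^{1/2}.
\]

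\textbf{Step 3: the first factor.} Insert the trivial bound from Step 1 and use the classical estimate \(\sum_{m\le z}\tau(m)^{2B}/m\ll_B(\log z)^{2^{2B}}\), which holds for real \(B\) as well, e.g.\ via Rankin or Shiu-type bounds, or simply \(\tau^{2B}\le\tau^{2\lceil B\rceil}\). This yields
\[
\sum_{m\le P^{1/3}}\tau(m)^{2B}E(m)\ll_B P(\log P)^{C_B}
\]
for some constant \(C_B\) depending only on \(B\).

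\textbf{Step 4: the second factor.} This is the Bombieri--Vinogradov theorem. Writing \(\pi_I(a;m)=\pi(P/3;m,a)-\pi(P/4;m,a)\) (up to an endpoint prime) and \(\Pi=\pi(P/3)-\pi(P/4)+O(1)\), the error \(E(m)\) is at most the sum of the two Bombieri--Vinogradov discrepancies at \(x=P/3\) and \(x=P/4\), plus \(O(1)\). The level \(P^{1/3}\) is well below \(x^{1/2}(\log x)^{-A'}\), so
\[
\sum_{m\le P^{1/3}}E(m)\ll_{A'}P(\log P)^{-A'}+P^{1/3}.
\]

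\textbf{Step 5: combine.} Putting the two factors together gives
\[
\sum_{m\le P^{1/3}}\tau(m)^BE(m)\ll P(\log P)^{(C_B-A')/2},
\]
and choosing \(A'=2A+C_B\) finishes the proof.

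The only care needed is the reduction of the interval \(I\) to the usual \(\pi(x;m,a)\) form in Step 4, together with checking the level of distribution there; everything else is routine.
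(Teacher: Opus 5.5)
Your proposal is correct and follows the paper's proof essentially step for step: Cauchy--Schwarz to isolate the weight $\tau(m)^{2B}$, the trivial bound $E(m)\ll P\log\log P/m$ together with the divisor mean value for the first factor, and Bombieri--Vinogradov with an enlarged exponent for the second. Your Step 4 also spells out something the paper leaves implicit, namely reducing $\pi_I(a;m)$ and $\Pi$ to the standard Bombieri--Vinogradov form at the endpoints $P/3$ and $P/4$ at a cost of $O(P^{1/3})$.
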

\begin{proof}
Applying the Cauchy-Schwarz inequality, we obtain
\[
\sum_{m \le P^{1/3}} \tau(m)^B E(m) \le \left( \sum_{m \le P^{1/3}} \tau(m)^{2B} E(m) \right)^{1/2} \left( \sum_{m \le P^{1/3}} E(m) \right)^{1/2}.
\]
The second factor can be estimated by invoking the Bombieri-Vinogradov theorem. For any given constant $A > 0$, we have
\[
\sum_{m \le P^{1/3}} E(m)  \ll_A \frac{P}{(\log P)^A}.
\]
Next, we estimate the first factor using the trivial bound for $E(m)$.
By the lower bound for the Euler totient function, we have 
\(
E(m) \le \max_{(a,m)=1} \pi_I(a; m) + \frac{\Pi}{\varphi(m)} \ll \frac{P \log \log (m+2)}{m}.
\)
Hence
\[
\sum_{m \le P^{1/3}} \tau(m)^{2B} E(m) \ll \sum_{m \le P^{1/3}} \tau(m)^{2B}  \frac{P \log \log (m+2)}{m}\ll P (\log P)^{2^{2B}} \log \log P,
\]
where the last step follows from the classical mean value estimates $\sum_{m \le x} \frac{\tau(m)^k}{m} \ll (\log x)^{2^k}$.
Therefore the result follows by combining two estimates and choosing $A$ large enough with respect to $B$. 
\end{proof}

We also need a mean value estimate that  follows from the classical  Selberg--Delange theorem.
\begin{lem}\label{lem:euler-product}
Let \(A>0\) be fixed. Then
\[
    \sum_{\substack{m\le M\\ m\ \mathrm{squarefree} \\p\mid m\Rightarrow p\equiv1\mod4}}
    \frac{A^{\omega(m)}}{\varphi(m)}
    \asymp_A
    (\log M)^{A/2}.
\]
\end{lem}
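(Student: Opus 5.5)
The plan is to prove both bounds by comparing with an Euler product, using only Mertens' theorem for primes \(p\equiv1\pmod4\), in the form
\[
\sum_{\substack{\ell\le z\\ \ell\equiv1\mod 4}}\frac1\ell=\frac12\log\log z+c+O\!\left(\frac1{\log z}\right).
\]
Write \(g(m)=\mu^2(m)A^{\omega(m)}/\varphi(m)\) for \(m\) supported on primes \(\ell\equiv1\pmod 4\), and \(g(m)=0\) otherwise. Then \(g\) is a nonnegative multiplicative function with \(g(\ell)=A/(\ell-1)\) on such primes. An appeal to the Selberg--Delange method would also give an asymptotic, but the elementary argument below already suffices for \(\asymp\).

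\emph{Upper bound.} Since \(g\ge0\), Rankin's trivial bound gives
\[
\sum_{m\le M}g(m)\le\prod_{\substack{\ell\le M\\ \ell\equiv1\mod 4}}\left(1+\frac{A}{\ell-1}\right).
\]
Taking logarithms, we have \(\log(1+A/(\ell-1))=A/\ell+O_A(\ell^{-2})\). By Mertens' theorem in progressions the product is therefore \(\exp\bigl(\tfrac A2\log\log M+O_A(1)\bigr)\ll_A(\log M)^{A/2}\).

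\emph{Lower bound.} Here a truncation is needed, and this is the only mildly delicate step. Restrict to \(m\) whose prime factors all lie in \((5,z]\) with \(z=M^{1/K}\), where \(K=K(A)\) is a large constant, and additionally require \(\omega(m)\le K\); every such \(m\) satisfies \(m\le M\). The resulting sum is at least
\[
\sum_{j\le K}\frac{1}{j!}\Bigl(\sum_{\substack{\ell\le z\\ \ell\equiv1\mod 4}}\frac{A}{\ell}\Bigr)^{j}\bigl(1+o(1)\bigr),
\]
after removing terms with repeated primes. Those repeated-prime terms contribute a relative error \(O(S^{j-2}\sum_\ell\ell^{-2})\), which is of lower order. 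Put \(S=\tfrac A2\log\log z+O_A(1)\). A fixed truncation at \(j\le K\) only gives \(S^K/K!\), which is a power of \(\log\log\), not \((\log M)^{A/2}\). So the cap \(\omega(m)\le K\) must be dropped.

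Instead I would use the standard Rankin-type lower bound. Take all \(m\) composed of primes \(\ell\equiv1\pmod4\) with \(\ell\le M^{1/K}\), with no cap on \(\omega\). Then
\[
\sum_{m\le M}g(m)\ \ge\ \prod_{\ell\le z}\bigl(1+g(\ell)\bigr)-\sum_{m>M,\ P^+(m)\le z}g(m).
\]
The tail is at most \(M^{-\sigma}\sum_{P^+(m)\le z}g(m)m^{\sigma}\). Choosing \(\sigma=1/\log z=K/\log M\), we have \(\ell^{\sigma}\le e\) for \(\ell\le z\), and \(\sum_{\ell\le z}(\ell^\sigma-1)/\ell\ll1\). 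Hence
\[
\sum_{P^+(m)\le z}g(m)m^{\sigma}\le e^{O_A(1)}\prod_{\ell\le z}\bigl(1+g(\ell)\bigr),
\]
with an \(O_A(1)\) independent of \(K\). Since \(M^{-\sigma}=e^{-K}\), the tail is at most \(e^{-K+O_A(1)}\prod(1+g(\ell))\). Choosing \(K\) large in terms of \(A\), it is at most half the main product. Finally,
\[
\prod_{\ell\le M^{1/K}}\bigl(1+g(\ell)\bigr)\asymp_A(\log M^{1/K})^{A/2}\asymp_{A}(\log M)^{A/2},
\]
again by Mertens, which gives the lower bound.

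The main obstacle is exactly this lower bound, specifically controlling the tail \(m>M\) without losing a constant that depends on \(K\). The choice \(\sigma=1/\log z\) handles it, because the product \(\prod_{\ell\le z}(1+g(\ell)\ell^\sigma)\) exceeds \(\prod_{\ell\le z}(1+g(\ell))\) only by a bounded factor.
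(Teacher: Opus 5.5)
Your argument is correct, and it takes a genuinely different, more elementary route than the paper. The paper works with $f_A(m)=mA^{\omega(m)}/\varphi(m)$ and factors its Dirichlet series as $\zeta(s)^{A/2}G(s)$, where $G(s)=(1-2^{-s})^{A/2}L(s,\chi_4)^{A/2}H(s)$ and $H$ converges absolutely for $\Re s>1/2$. It then applies the Selberg--Delange theorem to get $\sum_{m\le x}f_A(m)\sim \frac{G(1)}{\Gamma(A/2)}x(\log x)^{A/2-1}$, and partial summation gives the full asymptotic $\frac{G(1)}{\Gamma(A/2+1)}(\log M)^{A/2}$.

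You use only Mertens' theorem for primes $\ell\equiv1\pmod 4$. Your upper bound is the inclusion $\{m\le M\}\subset\{P^+(m)\le M\}$ together with the Euler product. Your lower bound truncates to $P^+(m)\le z=M^{1/K}$ and removes the tail $m>M$ by Rankin's trick with $\sigma=1/\log z$. The key point, which you isolate correctly, is that $\sum_{\ell\le z}g(\ell)(\ell^{\sigma}-1)\le e\sigma\sum_{\ell\le z}g(\ell)\log\ell\ll_A 1$ uniformly in $K$. Hence the factor $M^{-\sigma}=e^{-K}$ wins once $K=K(A)$ is large.

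The paper's route gives an asymptotic with an explicit constant, but it needs the analytic input behind Selberg--Delange (continuation of $L(s,\chi_4)^{A/2}$ into a zero-free region). Yours gives only $\asymp$, with a lower-bound constant degraded by $K^{-A/2}$. That is all Section~\ref{sec:pf-main-lower} uses; in fact only the lower bound is needed there.

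A bonus of your method is uniformity under coprimality conditions. The same proof gives $\sum_{b\le M,\,(b,c)=1}g(b)\gg_A(\log M)^{A/2}\prod_{\ell\mid c}(1+g(\ell))^{-1}$ uniformly in $c$. This is exactly the kind of estimate the paper needs in \eqref{eq:b-sum-eval}, where it is justified only by appeal to ``the Selberg--Delange method''.

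In a written version, drop the abandoned attempt with the cap $\omega(m)\le K$. Also note that for the finitely many $\ell\le 2A+1$ the expansion $\log(1+A/(\ell-1))=A/\ell+O_A(\ell^{-2})$ should be replaced by an $O_A(1)$; for the upper bound, $\log(1+t)\le t$ suffices.
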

\begin{proof}
Define the non-negative multiplicative function \(f_A(m)\) by
\[
    f_A(m):=
    \begin{cases}
    \displaystyle \frac{mA^{\omega(m)}}{\varphi(m)},
    & \text{if }\mu^2(m)=1 \text{ and } p\mid m\implies p\equiv 1\pmod 4,\\[1ex]
    0,
    & \text{otherwise}.
    \end{cases}
\]
The sum we wish to estimate is \(S(M) := \sum_{m\le M} \frac{f_A(m)}{m}\). In fact, we will give the stronger asymptotic formula for $S(M)$.

For \(\Re s > 1\), the Dirichlet series associated with \(f_A\) is given by the Euler product
\[
    F(s)
    :=
    \sum_{m=1}^{\infty}\frac{f_A(m)}{m^s}
    =
    \prod_{p\equiv 1 \mod 4}
    \left(1+\frac{Ap}{(p-1)p^s}\right).
\]
Let \(z = A/2\), and let \(\chi_0, \chi_1\) be the principal and non-principal characters modulo \(4\), respectively. We factor the \(L\)-functions as
\[
    F(s) = \zeta(s)^z G(s), \quad \text{where} \quad G(s) := (1-2^{-s})^z L(s,\chi_1)^z H(s),
\]
and \(H(s)\) is defined by the Euler product
\[
    H(s) := \prod_{p\equiv 1\mod 4} \left(1+\frac{Ap}{(p-1)p^s}\right) (1-p^{-s})^{2z} \prod_{p\equiv 3\mod 4} (1-p^{-2s})^z.
\]
For \(p\equiv 1\pmod 4\), the factor expands as
\[
    \left(1+\frac{Ap}{p-1}p^{-s}\right)(1-p^{-s})^A = 1 + \left(\frac{Ap}{p-1}-A\right)p^{-s} + O_A(p^{-2\Re s}).
\]
Since \(\frac{Ap}{p-1}-A = \frac{A}{p-1} = O_A(p^{-1})\), the linear term is \(O_A(p^{-1-\Re s})\). For \(p\equiv 3\pmod 4\), the local factor is \((1-p^{-2s})^z = 1 + O_A(p^{-2\Re s})\). Thus, the Euler product for \(H(s)\) converges absolutely in the half-plane \(\Re s > 1/2\). Because \(L(1,\chi_1) = \pi/4 \ne 0\) and \(H(1) > 0\), the function \(G(s)\) is holomorphic and non-vanishing in a neighborhood of \(s=1\).

Applying now the Selberg--Delange theorem \cite[Chapter II.5]{Tenenbaum2015} to \(F(s)\), we obtain
\[
    \sum_{m\le x} f_A(m) = \frac{G(1)}{\Gamma(z)} x(\log x)^{z-1} \left(1+o(1)\right).
\]
Finally, we transition to \(S(M)\) via partial summation
\begin{align*}
    S(M) 
    &= \frac{1}{M}\sum_{m\le M}f_A(m) + \int_1^M \frac{1}{t^2} \left(\sum_{m\le t}f_A(m)\right)\,dt. 
\end{align*}
Using \(z\Gamma(z) = \Gamma(z+1)\) and substituting \(z=A/2\), we conclude
\[
    S(M) = \frac{G(1)}{\Gamma(A/2+1)} (\log M)^{A/2} \left(1+o(1)\right) \asymp_A (\log M)^{A/2},
\]
which completes the proof.
\end{proof}

\section{Friable integers and a weighted Selberg--Delange estimate}
\label{sec:friable-SD}

For an integer \(n\geq 1\), let \(P^+(n)\) denote its largest prime factor, with the convention \(P^+(1)=1\). 
For \(\kappa>0\), let the function \(\rho_\kappa\) be defined by
\[
    \rho_\kappa(u)=0 \qquad (u\le 0),
\]
\[
    \rho_\kappa(u)=\frac{u^{\kappa-1}}{\Gamma(\kappa)}
    \qquad (0<u\le 1),
\]
and, for \(u>1\), by the differential-difference equation
\[
    u\rho_\kappa'(u)+(1-\kappa)\rho_\kappa(u)
    +\kappa\rho_\kappa(u-1)=0 .
\]
When $\kappa=1$, this is the classical Dickman--de Bruijn function $\rho=\rho_1$. It is worth noting that $\rho_\kappa$ may be viewed as the $\kappa$-th convolution power of the Dickman--de Bruijn function. 
The following standard decay estimate for \(\rho_\kappa\) will be useful.  For
each fixed \(\kappa>0\), there exists a constant \(c_\kappa\) such that
\begin{align}\label{eqn:dickman-decay}
    0\le \rho_\kappa(u)
    \ll_\kappa \exp\{-c_\kappa u\log(u+2)+O_{\kappa}(u)\}
    \qquad (u\ge 2).
\end{align}
This can be deduced from the asymptotic formula for \(\rho_\kappa(u)\) proved by Hensley~\cite{Hensley1986}, but the crude form is all that will be needed here.
In the special case \(\kappa=1\), it is consistent with the  familiar Dickman--de Bruijn decay 
\(\rho(u)=u^{-u+o(u)}\). 
For the definition and properties of the function $\rho_{\kappa}$ stated above, we also refer the reader to ~\cite{Smida1991,HildebrandTenenbaum1993}.

To bound multiplicative functions restricted to friable integers, we rely on the friable Selberg--Delange method of Tenenbaum and Wu \cite{TenenbaumWu2003}.
We state the precise form here for the convenience of the reader,.

\begin{thm}[Tenenbaum--Wu]\label{lem:TW-cor23}
Let \(\kappa>0\), \(\eta \in (0, 1/2)\), and \(\varepsilon>0\) be fixed constants. Let \(f\) be a non-negative real-valued multiplicative function. Suppose there exist positive constants \(A\) and \(C\) such that 
\[
    \sum_p\sum_{\nu\geq 2} \frac{f(p^\nu)}{p^{(1-\eta)\nu}} \leq A,
\]
and 
\[
    \sum_{p\leq z} f(p)\log p = \kappa z + O_C\left(z\exp\{-(\log z)^{3/5-\varepsilon/2}\}\right) \qquad (z>1).
\]
Then, uniformly for \(x\ge y\ge3\), and \(1\leq u := \frac{\log x}{\log y} \leq \exp\{(\log y)^{3/5-\varepsilon}\}\), we have
\[
    \sum_{\substack{n\leq x\\ P^+(n)\leq y}} f(n) 
    = C_\kappa(f)\, x(\log y)^{\kappa-1}\rho_\kappa(u) 
    \left\{ 1 + O_{A,C,\varepsilon,\eta,\kappa}\left( \frac{\log(u+1)}{\log y} + \frac{1}{(\log y)^\kappa} \right) \right\},
\]
where the constant \(C_\kappa(f)\) is given by the absolutely convergent product
\[
    C_\kappa(f) := \prod_p \left(1-\frac{1}{p}\right)^\kappa \sum_{\nu\geq 0}\frac{f(p^\nu)}{p^\nu}.
\]
\end{thm}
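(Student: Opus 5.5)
One route is to transfer the classical convolution-identity proof of the Hildebrand–de Bruijn asymptotic for \(\Psi(x,y)\) to the weighted setting. In our paper this statement is only quoted from \cite{TenenbaumWu2003}, and the plan below is my reconstruction, not necessarily the route taken there. Write \(\Psi_f(x,y):=\sum_{n\le x,\,P^+(n)\le y}f(n)\). Since \(f\) is multiplicative and \(\log n=\sum_{p^\nu\| n}\log p^\nu\), we have the exact identity
\[
\sum_{\substack{n\le x\\ P^+(n)\le y}} f(n)\log n=\sum_{\substack{p^\nu\le x\\ p\le y}} f(p^\nu)\log p^{\nu}\sum_{\substack{m\le x/p^\nu\\ P^+(m)\le y,\ p\nmid m}} f(m).
\]
The terms with \(\nu\ge2\), and the correction for dropping the condition \(p\nmid m\), are controlled by the hypothesis \(\sum_p\sum_{\nu\ge2}f(p^\nu)p^{-(1-\eta)\nu}\le A\). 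Combined with Rankin's trick, this gives a relative saving of a power of \(y^{-\eta}\) or of \((\log y)^{-1}\).

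In the main term I would insert the prime number theorem hypothesis \(\sum_{p\le z}f(p)\log p=\kappa z+O(z\exp\{-(\log z)^{3/5-\varepsilon/2}\})\) and integrate by parts. This turns the identity into an integral equation
\[
\Psi_f(x,y)\log x=\kappa\int_1^{y}\Psi_f(x/t,y)\,\mathrm{d}t+\text{(error)},
\]
which is the discrete analogue of \(u\rho_\kappa(u)=\kappa\int_{u-1}^{u}\rho_\kappa(v)\,\mathrm{d}v\). That integral form is equivalent to the differential-difference equation defining \(\rho_\kappa\), as one checks by differentiating. The initial range \(x\le y\) (\(u\le1\)) is exactly the classical Selberg–Delange theorem: by the hypothesis on \(f(p)\) the Dirichlet series factors as \(\zeta(s)^\kappa G(s)\) with \(G\) regular near \(1\) and \(G(1)=C_\kappa(f)\). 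This gives \(\Psi_f(x,y)=C_\kappa(f)x(\log x)^{\kappa-1}/\Gamma(\kappa)\,(1+O(1/\log x))\), matching \(\rho_\kappa(u)=u^{\kappa-1}/\Gamma(\kappa)\).

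I would then set \(\Delta(x,y):=\Psi_f(x,y)/(C_\kappa(f)x(\log y)^{\kappa-1}\rho_\kappa(u))-1\) and propagate the bound on \(\Delta\) inductively in \(u\), over intervals of length \(1\). The integral equation expresses \(u\,\Delta\) at \(u\) as a \(\rho_\kappa\)-weighted average of \(\Delta\) over \([u-1,u]\), plus errors of relative size \(\log(u+1)/\log y+(\log y)^{-\kappa}\). The term \((\log y)^{-\kappa}\) comes from the lower end \(t\) near \(1\) and the small-\(n\) behaviour.

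The hard step is making this induction uniform all the way to \(u\le\exp\{(\log y)^{3/5-\varepsilon}\}\), so that the errors do not accumulate like \(u\) times the step error. Two facts need to be combined. The first is that \(\rho_\kappa(u-v)/\rho_\kappa(u)\approx e^{\xi(u)v}\) with \(\xi(u)\sim\log u\), which needs the Hensley/Smida asymptotics for \(\rho_\kappa\), not just the crude bound \eqref{eqn:dickman-decay}. The second is that the prime number theorem error, evaluated at \(t\le y\), is small compared with these exponential weights. For very large \(u\) I expect the recursion to become too lossy. I would then switch to the saddle-point method: apply Perron's formula to \(F(s;y)=\prod_{p\le y}\sum_\nu f(p^\nu)p^{-\nu s}\) at the saddle point \(\alpha=\alpha_\kappa(x,y)\), and compare \(F(s;y)\) with \(\zeta(s,y)^\kappa G(s)\) using the zero-free region of Vinogradov–Korobov type. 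That region is what limits \(u\) to \(\exp\{(\log y)^{3/5-\varepsilon}\}\) and produces the \(\log(u+1)/\log y\) relative error, exactly as in Hildebrand–Tenenbaum's treatment of \(\Psi(x,y)\).
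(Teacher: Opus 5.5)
The paper does not prove this statement. It quotes Corollaire~2.3 of Tenenbaum--Wu, only writing out their class $\mathcal{M}_\kappa(A,C,\eta;L_{\varepsilon/2})$ and region $H_\varepsilon$, so your outline must stand on its own. Its Hildebrand-type skeleton is reasonable: the decomposition of $f(n)\log n$, the integral equation, and $u\rho_\kappa(u)=\kappa\int_{u-1}^u\rho_\kappa(v)\,\mathrm{d}v$, which you check correctly. As written, however, it has two genuine gaps.

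\textbf{The base range $u\le1$.} Your appeal to the classical Selberg--Delange theorem is not justified.
\begin{itemize}
\item The hypothesis only gives $\sum_{p\le z}f(p)\log p=\kappa z+O(z e^{-(\log z)^b})$ with $b=3/5-\varepsilon/2<1$. This makes $\sum_p(f(p)-\kappa)p^{-s}$ converge on $\sigma=1$ but says nothing to the left of it, since $\int^\infty z^{-\sigma}e^{-(\log z)^b}\,\mathrm{d}z$ diverges for every $\sigma<1$.
\item Hence $G(s)=F(s)\zeta(s)^{-\kappa}$ need not be holomorphic at $s=1$. The classical theorem needs holomorphy in a region $\sigma>1-c/\log(|\tau|+3)$, so it does not apply.
\item You need instead a mean-value theorem that uses only a PNT-type hypothesis (the elementary, convolution form of the method). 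Alternatively, work with the truncated product over $p\le y$, which the hypothesis controls only in a strip of width $\asymp(\log y)^{b-1}$.
\end{itemize}
The strength of this input also matters. Suppose $\Psi_f(x/t,y)$ has relative error $O(1/\log(x/t))$ for $x/t\le y$. Then, for $1\le u\le2$, the small-$x/t$ part of $\kappa\int_1^y\Psi_f(x/t,y)\,\mathrm{d}t$ costs a relative $(\log y)^{-1}\int_{\max(u-1,1/\log y)}^1v^{\kappa-2}\,\mathrm{d}v$.
\begin{itemize}
\item For $\kappa<1$ this is $(\log y)^{-\kappa}$. This, not $t$ near $1$, is the real source of that term.
\item For $\kappa=1$ it is of order $\log\log y/\log y$ when $u-1$ is small, which exceeds the claimed error. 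You therefore also need the fact that the lower-order Selberg--Delange terms vanish when $\kappa=1$.
\end{itemize}

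\textbf{Uniformity in $u$.} This is the more serious gap. The uniformity up to $u\le\exp\{(\log y)^{3/5-\varepsilon}\}$ is the real content of the statement, and you do not establish it because you place the error wrongly. Once each error term in the identity is shown to be $O(\Psi_f(x,y))$, compare with the main term $\Psi_f(x,y)\log x$, where $\log x=u\log y$. For $u\ge2$ this gives $\Delta(u)=\int_{u-1}^u w_u(v)\Delta(v)\,\mathrm{d}v+O(1/(u\log y))$, with $w_u=\kappa\rho_\kappa/(u\rho_\kappa(u))$ a probability density on $[u-1,u]$. So the per-step error is $O(1/(u\log y))$, not $\log(u+1)/\log y$.

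Your two facts are what make the error terms $O(\Psi_f(x,y))$. With $E(t)=\sum_{p\le t}f(p)\log p-\kappa t$, the bound $\rho_\kappa(u-v)\ll e^{v\xi(u)}\rho_\kappa(u)$ and the induction hypothesis give $\int_1^y\Psi_f(x/t,y)\,\mathrm{d}E(t)\ll\Psi_f(x,y)\int_0^{\log y}e^{w\xi(u)/\log y-w^b}\,\mathrm{d}w$. This is $O(\Psi_f(x,y))$ when $\xi(u)\le\frac12(\log y)^b$, and that condition is what cuts out $H_\varepsilon$.

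Errors then do not accumulate, because $w_u$ has mean displacement $\asymp1$ below $u$. Take the increasing majorant $B(u)=c\log(u+1)/\log y$, plus a suitable base term. It satisfies $B(u)-\int w_uB\gg c/(u\log y)$, so for large $c$ it absorbs the step error. Induction on $u$ then covers all of $H_\varepsilon$ with no change of method.

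Your saddle-point fallback could not replace this step anyway:
\begin{itemize}
\item A Hildebrand--Tenenbaum-type formula expresses $\Psi_f$ through $x^\alpha F(\alpha;y)$ with relative error of order $1/u$. That beats $\log(u+1)/\log y$ only for $u\gg\log y/\log\log y$. By then, naive accumulation $u/\log y\approx1/\log\log y$ has already failed.
\item Converting that formula into $C_\kappa(f)x(\log y)^{\kappa-1}\rho_\kappa(u)$ needs a separate saddle-point comparison with the Laplace transform of $\rho_\kappa$. That comparison cannot go through $\zeta(s)^\kappa G(s)$, for the reason given in the base-range part.
\end{itemize}
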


\begin{proof}
This is Corollaire 2.3 of Tenenbaum and Wu \cite{TenenbaumWu2003}, with their function class \(\mathcal{M}_\kappa(A,C,\eta; L_{\varepsilon/2})\) and region $H_{\varepsilon}$ written out explicitly here.
\end{proof}

Equipped with Theorem \ref{lem:TW-cor23}, we can now deduce the required bound for our specific weighted sum.

\begin{lem}[Friable Selberg--Delange bound]\label{lem:friable-SD}
Let \(k\geq 1\), \(A>0\), and \(\varepsilon\in(0, 3/5)\) be fixed. Let \(x\geq y\geq 3\). Assume that \(1\leq u=\frac{\log x}{\log y}\leq \exp\{(\log y)^{3/5-\varepsilon}\}\). Then
\[
    \sum_{\substack{n\leq x\\ P^+(n)\leq y}} r_0(n)^k \prod_{p\mid n}\left(1+\frac{A}{p}\right) 
    \ll_{k,A,\varepsilon} x(\log y)^{2^{k-1}-1}\rho_{2^{k-1}}(u).
\]
\end{lem}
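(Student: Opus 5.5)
Apply Theorem~\ref{lem:TW-cor23} directly, with $\kappa=2^{k-1}$ and a fixed $\eta\in(0,1/2)$, say $\eta=1/4$, to the non-negative multiplicative function
\[
    f(n):=r_0(n)^k\prod_{p\mid n}\Bigl(1+\frac{A}{p}\Bigr).
\]
The function $f$ is multiplicative because $r_0$ is multiplicative (being $1*\chi_4$) and $n\mapsto\prod_{p\mid n}(1+A/p)$ is multiplicative. Its values on prime powers are
\[
    f(p^\nu)=r_0(p^\nu)^k\Bigl(1+\frac{A}{p}\Bigr),
\]
where $r_0(p^\nu)=\nu+1$ for $p\equiv1\pmod4$, $r_0(2^\nu)=1$, and $r_0(p^\nu)\in\{0,1\}$ for $p\equiv3\pmod 4$. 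In particular $f(p^\nu)\le(1+A)(\nu+1)^k$ always. Once the two hypotheses are verified, the conclusion gives
\[
    \sum_{\substack{n\le x\\ P^+(n)\le y}}f(n)=C_\kappa(f)\,x(\log y)^{\kappa-1}\rho_\kappa(u)\bigl\{1+O(\cdots)\bigr\}.
\]
In the allowed range the error term inside the braces is $O(1)$, since $\log(u+1)\le(\log y)^{3/5}$. The constant $C_\kappa(f)$ depends only on $k$ and $A$, so we obtain the stated bound with $\kappa-1=2^{k-1}-1$.

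\emph{Hypothesis on prime powers.} We have
\[
    \sum_p\sum_{\nu\ge2}f(p^\nu)p^{-(1-\eta)\nu}\le(1+A)\sum_p\sum_{\nu\ge2}(\nu+1)^kp^{-3\nu/4}.
\]
The inner sum is $\ll_k p^{-3/2}$, so the total converges to a constant depending only on $k$ and $A$.

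\emph{Hypothesis on primes.} For $p\equiv1\pmod4$ we have $f(p)=2^k(1+A/p)$. For $p\equiv3\pmod4$ we have $f(p)=0$, and $f(2)=1+A/2$. Hence
\[
    \sum_{p\le z}f(p)\log p=2^k\theta(z;4,1)+O_{k,A}(\log z)+O_{k,A}\Bigl(\sum_{p\le z}\frac{\log p}{p}\Bigr),
\]
and the last two terms together are $O_{k,A}(\log z)$. The prime number theorem for arithmetic progressions modulo $4$ with the Vinogradov--Korobov error term gives
\[
    \theta(z;4,1)=\tfrac z2+O\bigl(z\exp\{-c(\log z)^{3/5}(\log\log z)^{-1/5}\}\bigr).
\]
This error is $\ll z\exp\{-(\log z)^{3/5-\varepsilon/2}\}$, and so is $\log z$. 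Therefore
\[
    \sum_{p\le z}f(p)\log p=2^{k-1}z+O\bigl(z\exp\{-(\log z)^{3/5-\varepsilon/2}\}\bigr),
\]
which is the required form with $\kappa=2^{k-1}$. The constant $C$ in the hypothesis depends only on $k$, $A$ and $\varepsilon$. For small $z$ the estimate holds trivially after enlarging the constant.

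\emph{Finiteness of $C_\kappa(f)$.} For $p\equiv1\pmod4$ the local factor is
\[
    (1-1/p)^{\kappa}\bigl(1+2^k/p+O_{k,A}(p^{-2})\bigr).
\]
For $p\equiv3\pmod4$ it is $(1-1/p)^{\kappa}(1+O(p^{-2}))$. The product of these factors up to $z$ behaves like
\[
    \exp\Bigl\{\sum_{p\equiv1 \bmod 4}\frac{2^k-2^{k-1}}{p}-\sum_{p\equiv3 \bmod 4}\frac{2^{k-1}}{p}\Bigr\},
\]
which converges by Mertens' theorem in progressions. So $C_\kappa(f)$ is finite and positive; its finiteness is all that is needed for the upper bound.

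The only delicate step is matching the error term in the prime sum to the precise shape required by Theorem~\ref{lem:TW-cor23}, namely $\exp\{-(\log z)^{3/5-\varepsilon/2}\}$. This is exactly the de la Vallée Poussin/Vinogradov--Korobov zero-free region for $L(s,\chi_4)$. Since $\chi_4$ is a single fixed character, there is no Siegel-zero issue, and the step is routine.
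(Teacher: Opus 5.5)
Your proposal is correct and follows the paper's proof essentially verbatim. You use the same multiplicative $f$, the choice $\eta=1/4$ with $f(p^\nu)\le(1+A)(\nu+1)^k$, and the verification of the prime hypothesis with $\kappa=2^{k-1}$ via the Vinogradov--Korobov prime number theorem modulo $4$; your checks that the bracketed error is $O(1)$ and that $C_\kappa(f)$ is finite just make explicit what the paper leaves implicit. One small correction to your closing remark: the de la Vall\'ee Poussin region gives only an error $\exp\{-c\sqrt{\log z}\}$, which is not enough when $\varepsilon<1/5$, so the Vinogradov--Korobov input you actually used is genuinely needed.
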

\begin{proof}
Define the non-negative multiplicative function \(f(n):=r_0(n)^k \prod_{p\mid n}\left(1+\frac{A}{p}\right)\). To apply Theorem \ref{lem:TW-cor23}, we must  verify that \(f\) satisfies the required hypotheses on prime powers and primes.

First, we note that $f(p^\nu)\leq (\nu+1)^k(1+A) \ll_{k,A} (\nu+1)^k$ since \(r_0(p^\nu)=\sum_{0\le a\le \nu}\chi_4(p^a)\le \nu+1\) and 
\(1+\frac{A}{p}\leq 1+A\). Thus, choosing \(\eta=1/4\), we evaluate 
\[
    \sum_p\sum_{\nu\geq 2} \frac{f(p^\nu)}{p^{(1-\eta)\nu}} 
    \ll_{k,A} \sum_p\sum_{\nu\geq 2} \frac{(\nu+1)^k}{p^{3\nu/4}}\ll_{k,A} 1.
\]

Second, we have \(f(2)\ll 1\), \(f(p)=0\) if \(p\equiv 3\pmod 4\), and \(f(p)=2^k\left(1+\frac{A}{p}\right)\) if \(p\equiv 1\pmod 4\). Hence,
\begin{align}\label{eqn:koukou}
    \sum_{p\leq z} f(p)\log p 
    &= 2^k \sum_{\substack{p\leq z\\ p\equiv 1 \mod 4}} \log p + O_{k,A}\left(\sum_{p\leq z} \frac{\log p}{p}\right)
    \end{align}
The second sum is \(O(\log z)\). For the first sum, we use the prime number theorem in arithmetic progressions with the Vinogradov--Korobov error term, see for example
Koukoulopoulos~\cite[Theorem~1.1]{Koukoulopoulos2013}, which yields
\[
    \sum_{\substack{p\leq z\\ p\equiv 1 \mod 4}} \log p 
    = \frac{z}{2} + O\left(z\exp\{-c(\log z)^{3/5}(\log\log z)^{-1/5}\}\right).
\]
Plugging this into \eqref{eqn:koukou} we see \(f\)  matches the hypotheses of Theorem \ref{lem:TW-cor23} with parameter \(\kappa=2^{k-1}\).
Applying now Theorem~\ref{lem:TW-cor23} gives the desired bound.
\end{proof}

\section{Auxiliary estimates for the upper bound}
\label{sec:upper-aux}

Throughout this section, \(k\geq 2\) is fixed.
Define
\[
\mathcal A
:=
\left\{
(\vec m,\vec n)=(m_i,n_i)_{1\le i\le k}\in \mathbb Z^{2k}:
\begin{array}{l}
m_i^2+n_i^2=m_\ell^2+n_\ell^2\quad(1\leq i,\ell\leq k),\\
(m_i,n_i)\neq (m_\ell,n_\ell)\quad(i\neq \ell)
\end{array}
\right\}.
\]
For \((\vec m,\vec n)\in\mathcal A\), we write
\[
    N=N(\vec m,\vec n):=m_1^2+n_1^2=\cdots=m_k^2+n_k^2,
\] 
and
\[
    R(\vec m,\vec n)
    :=\prod_{1\leq i<j\leq k}
    (m_i n_j-m_j n_i)(m_i m_j+n_i n_j).\]

We shall use the following estimates from Sabuncu's work. The first is the
basic combinatorial identity for \(\mathcal A\).

\begin{lem}
\label{lem:sabuncu-A-count}
For every \(N\geq1\),
\[
    \#\left\{
    (\vec m,\vec n)\in\mathcal A:
    N(\vec m,\vec n)=N
    \right\}
    =
    k!\binom{4r_0(N)}{k}\ll_k r_0(N)^k.
\]
\end{lem}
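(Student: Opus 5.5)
The identity is pure counting: once $N$ is fixed, the $k$-tuples in $\mathcal A$ with $N(\vec m,\vec n)=N$ are exactly the ordered $k$-tuples of pairwise distinct lattice points on the circle $a^2+b^2=N$. So the plan is to count those lattice points and then count ordered selections of $k$ distinct ones.

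First step: compute the number of points on the circle. Let $\mathcal C_N=\{(a,b)\in\Z^2:a^2+b^2=N\}$. By definition, $r_0(N)$ counts the points of $\mathcal C_N$ with $a\ge0$ and $b>0$. The rotation $(a,b)\mapsto(-b,a)$ by $90^\circ$ acts freely on $\mathcal C_N$ when $N\ge1$, since the origin is not on the circle. The region $\{a\ge 0,\ b>0\}$ is a fundamental domain for this action: every nonzero lattice point lies in exactly one of the four rotated quarter-planes $\{a\ge0,b>0\}$, $\{a<0,b\ge0\}$, $\{a\le0,b<0\}$, $\{a>0,b\le0\}$. Therefore
\[
\#\mathcal C_N=4r_0(N).
\]
This is also the classical identity $r(N)=4\sum_{d\mid N}\chi_4(d)$.

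Second step: count the tuples. Given $N$, an element of $\mathcal A$ with $N(\vec m,\vec n)=N$ is a $k$-tuple $\bigl((m_1,n_1),\dots,(m_k,n_k)\bigr)$ with every entry in $\mathcal C_N$ and the entries pairwise distinct. The equal-norm condition in $\mathcal A$ is automatic once all entries lie on the same circle. Choosing such a tuple amounts to picking a $k$-element subset of $\mathcal C_N$ and then ordering it. This gives
\[
\binom{4r_0(N)}{k}\,k!
\]
tuples, and the count is $0$ when $4r_0(N)<k$.

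Final step: the upper bound. We have $k!\binom{M}{k}=M(M-1)\cdots(M-k+1)\le M^k$ with $M=4r_0(N)$, so the count is at most $4^k r_0(N)^k\ll_k r_0(N)^k$.

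Nothing here is a genuine obstacle. The only point needing care is the fundamental-domain check in the first step, which relies on the boundary conventions $a\ge0$, $b>0$ in the definition of $r_0$.
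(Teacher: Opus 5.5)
Your proof is correct and complete: fixing $N$ reduces $\mathcal A$ to ordered $k$-tuples of distinct lattice points on $a^2+b^2=N$. Your fundamental-domain check correctly gives $4r_0(N)$ such points, so the count is $k!\binom{4r_0(N)}{k}\le 4^k r_0(N)^k$. The paper gives no argument of its own and simply cites Sabuncu's identity (4.1); your counting is the elementary verification behind that citation.
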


\begin{proof}
This is \cite[(4.1)]{Sabuncu2024}.
\end{proof}

For fixed base components $(\vec m,\vec n)$, let $\nu_p(\vec m,\vec n)$ denote the number of local roots over the finite field $\mathbb{F}_p$, given by
\[
    \nu_p(\vec m,\vec n) := \#\{(r,s) \in \mathbb{F}_p^2 :  (r^2+s^2)\prod_{i=1}^k (m_ir-n_is)(n_ir+m_is) \equiv 0 \mod p\}.
\]
Define the associated singular series by the Euler product
\begin{align}\label{eqn:singular-series}
    \mathfrak{S}_k(\vec m,\vec n) := \prod_p \left(1-\frac{\nu_p(\vec m,\vec n)}{p^2}\right)\left(1-\frac{1}{p}\right)^{-(2k+1)}.
\end{align}

Let \(f_k(z;\vec m,\vec n)\) denote the number of pairs \((r,s)\) with
\(r^2+s^2\le z\) for which \(r^2+s^2\) and all the \(2k\) linear forms
\(m_ir-n_is\), \(n_ir+m_is\) are prime.
The next estimate is the Selberg sieve bound for \(f_k(z;\vec m,\vec n)\). 

\begin{lem}[Selberg sieve upper bound]
\label{lem:sabuncu-sieve}
We have, uniformly in \(z\geq 3\), 
\[
    f_k(z;\vec m,\vec n)
    \ll_k
    \mathfrak{S}_k(\vec m,\vec n)\, \frac{z}{(\log z)^{2k+1}}.
\]
\end{lem}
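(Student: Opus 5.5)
We plan to deduce this directly from the upper-bound form of the Selberg sieve for a sieve problem of dimension \(2k+1\), which is how the corresponding statement arises in \cite[Lemma~4.2]{Sabuncu2024}. Take \(\mathcal{B}\) to be the set of integer pairs \((r,s)\) with \(r^2+s^2\le z\), each with weight one, and sift by the polynomial
\[
F(r,s):=(r^2+s^2)\prod_{i=1}^k (m_ir-n_is)(n_ir+m_is).
\]
Any pair counted by \(f_k(z;\vec m,\vec n)\) has each of the \(2k+1\) factors prime. So, apart from pairs where some factor is at most \(z^{1/2}\) or equals a prime dividing a factor, no prime \(p\le z^{\theta}\) divides \(F(r,s)\). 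Here \(\theta>0\) is a small constant depending only on \(k\). Since every factor is at most \(z\) in absolute value, it suffices to bound the number of pairs with \((F(r,s),P(z^\theta))=1\) and then add the exceptional pairs. A linear form \(m_ir-n_is\) takes any given value \(\pm p\) on \(O(z^{1/2})\) pairs. Hence the exceptional pairs contribute \(O_k(z^{1/2+\theta})\) in total, which is acceptable provided \(\mathfrak S_k\) is bounded below. It is, because each local factor is at least \((1-(2k+1)/p)(1-1/p)^{-(2k+1)}\) for large \(p\), and the finitely many small \(p\) are handled the same way. The one caveat is that a local factor could vanish if \(\nu_p=p^2\). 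In that case \(p\mid F(r,s)\) for every pair, so \(f_k\) is \(O(1)\) and the bound is trivial.

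Next, compute the level of distribution. For squarefree \(d\), the number of pairs \((r,s)\) in the disc with \(d\mid F(r,s)\) equals \(\frac{\nu_d}{d^2}\pi z+O(\nu_d\, z^{1/2}/d+\nu_d)\), where \(\nu_d=\prod_{p\mid d}\nu_p\) by the Chinese remainder theorem. This follows by counting lattice points of each residue class mod \(d\) in the disc. The density is \(g(p)=\nu_p/p^2\). Generically \(\nu_p=(2k+1)p+O_k(1)\), since each linear form vanishes on a line of \(p\) points and \(r^2+s^2\) vanishes on \(2p-1\) points when \(p\equiv1 \bmod 4\) and on one point otherwise. On average \(pg(p)\) is therefore \(2k+1\), so the sieve dimension is \(\kappa=2k+1\). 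We will use the crude bound \(\nu_p\le (2k+2)p\), valid except when \(F\equiv0\) identically mod \(p\), a case already handled above. With this bound the remainder terms satisfy \(\sum_{d\le D}\mu^2(d)|R_d|\ll \sum_{d\le D}\mu^2(d)(2k+2)^{\omega(d)}(z^{1/2}+d)\). Choosing \(D=z^{1/4}\) makes this \(\ll z^{3/4+\varepsilon}\).

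The Selberg upper bound sieve then gives
\[
\#\{(r,s):(F(r,s),P(z^\theta))=1\}\ll_k z\prod_{p<z^\theta}(1-g(p))+z^{3/4+\varepsilon},
\]
with \(D=z^{1/4}\) and \(\theta\) chosen accordingly. This requires the dimension condition \(\prod_{w\le p<w'}(1-g(p))^{-1}\le K(\log w'/\log w)^{2k+1}\). The condition follows from \(g(p)\le (2k+2)/p\) together with Mertens-type bounds for the primes where \(\nu_p\) differs from its generic value. Finally, we write \(\prod_{p<z^\theta}(1-g(p))=\prod_{p<z^\theta}(1-g(p))(1-1/p)^{-(2k+1)}\cdot\prod_{p<z^\theta}(1-1/p)^{2k+1}\). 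The second product is \(\asymp_k(\log z)^{-(2k+1)}\). The first is a partial product of \(\mathfrak S_k\), and its tail over \(p\ge z^\theta\) is \(\gg 1\).

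The main obstacle is \textbf{uniformity in \((\vec m,\vec n)\)}. The dimension condition must hold with a constant \(K\) depending only on \(k\), and the partial product must be compared with the full singular series uniformly. For primes \(p\nmid N R(\vec m,\vec n)\), the \(2k\) lines are distinct and none of them lies inside the conic, so \(\nu_p=(2k+1)p+O(k^2)\) and the local factors are \(1+O_k(p^{-2})\). For primes dividing \(NR\), lines coincide, which forces \(\nu_p<(2k+1)p\); the local factor is then larger than generic but still \(\le (1+O_k(1/p))\). The tail \(\prod_{p\ge z^\theta}\) is therefore bounded below by \(\prod_{p\ge z^\theta}(1-O_k(p^{-2}))\) times the product over \(p\mid NR\) of factors that are each at least \(1\). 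This gives \(\gg_k 1\) uniformly in \((\vec m,\vec n)\), completing the proof.
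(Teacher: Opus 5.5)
Your overall route is the right one: a Selberg upper-bound sieve on the lattice points of the disc, with level $D=z^{1/4}$, and $\mathfrak S_k$ recovered from $V(z^\theta)=\prod_{p<z^\theta}(1-\nu_p/p^2)$. This is the argument behind \cite[Lemma 4.2]{Sabuncu2024}, which the paper simply cites. The sieve bookkeeping is fine: the crude bound $\nu_p\le(2k+2)p$ already gives an admissible dimension hypothesis, because the output is $\ll zV(z^\theta)$ and $V$ is then evaluated directly.

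The gap is in the local computation, at exactly the step you single out as the main obstacle.
\begin{itemize}
\item For odd $p\nmid NR$ the $2k$ lines are distinct and non-isotropic. But when $p\equiv1\pmod4$ the conic $r^2+s^2$ is itself two further lines, so $\nu_p=(2k+2)(p-1)+1$. When $p\equiv3\pmod4$ one has $\nu_p=2k(p-1)+1$. The value $(2k+1)p$ is only an average over $p$.
\item Hence the generic Euler factor of $\mathfrak S_k$ is $1-\chi_4(p)/p+O_k(p^{-2})$, not $1+O_k(p^{-2})$, and $\mathfrak S_k$ converges only conditionally.
\item Your lower bound $(1-(2k+1)/p)(1-1/p)^{-(2k+1)}$ for the local factors fails at every generic $p\equiv1\pmod4$.
\item Your tail argument (``$\prod_{p\ge z^\theta}(1-O_k(p^{-2}))$ times factors $\ge1$ at $p\mid NR$'') also breaks down. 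With the correct generic factors, the product over $p\nmid NR$ is bounded below only in terms of $\sum_{p\mid NR,\,p\ge z^\theta}1/p$, which is not controlled uniformly in $(\vec m,\vec n)$. Discarding primes $p\equiv3\pmod4$ that divide $NR$ discards compensating factors $1+1/p$.
\end{itemize}
The uniform bounds you need (tail $\gg_k1$, and $\mathfrak S_k\gg_k1$ to absorb the exceptional pairs and the remainder $z^{3/4+\varepsilon}$) are true, but they require two inputs you do not supply:
\begin{itemize}
\item[(i)] Coincidences among the lines only decrease $\nu_p$. So at every non-degenerate large $p$ the factor is at least the generic factor of its class mod $4$, which lets you compare with a product over \emph{all} primes.
\item[(ii)] $\sum_{p\ge w}\chi_4(p)/p\ll1/\log w$ (the prime number theorem in progressions mod $4$). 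This gives $\prod_{p\ge w}\bigl(1-\chi_4(p)/p+O_k(p^{-2})\bigr)=1+O_k(1/\log w)$ for $w\gg_k1$. The boundedly many small primes are handled by $1-\nu_p/p^2\ge p^{-2}$.
\end{itemize}

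The degenerate case is also not handled correctly. Suppose $\nu_p=p^2$ for some $p$. This happens exactly when $p\mid\gcd(m_i,n_i)$ for some $i$, or when $p=2$ and $N$ is odd (then the lines $r\equiv0$, $s\equiv0$, $r\equiv s$ cover $\mathbb F_2^2$). In that case $\mathfrak S_k=0$ and the inequality asserts $f_k=0$, so showing $f_k=O(1)$ does not make it ``trivial.'' As literally stated, the lemma needs either an additive $O(1)$ or a restriction to non-degenerate tuples. In the paper's application this is harmless: $n\equiv2\pmod4$ with $p'$ odd forces all $m_i,n_i$ to be odd, and $p_i\ne q_i$ forces $\gcd(m_i,n_i)=1$.

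A minor slip: the linear forms are bounded by $\sqrt{Nz}$, not $z$. This plays no role, since a prime divisible by $p$ equals $p$.
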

\begin{proof}
This is \cite[Lemma 4.2]{Sabuncu2024}.
\end{proof}

The singular series is controlled by two divisor sums.

\begin{lem}[Singular series decomposition]
\label{lem:sabuncu-singular-series}
For \((\vec m,\vec n)\in\mathcal A\), we have
\[
\mathfrak S_k(\vec m,\vec n)
\ll_k
\sum_{\ell_1\mid N}
    \frac{\mu^2(\ell_1)(4k)^{\omega(\ell_1)}}{\ell_1}
+
\sum_{\substack{\ell_2\mid R(\vec m,\vec n)\\
(\ell_2,N)=1}}
    \frac{\mu^2(\ell_2)(4k)^{\omega(\ell_2)}}{\ell_2}.
\]
\end{lem}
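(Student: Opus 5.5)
This statement is \cite[(4.10)]{Sabuncu2024}, so one may simply cite it. For a self-contained argument, analyse the Euler product \eqref{eqn:singular-series} prime by prime.

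\emph{Goal.} Show that the local factor at $p$ is $1+O_k(1/p^2)$ unless $p\mid 2NR(\vec m,\vec n)$, and is at most $1+4k/p+O_k(1/p^2)$ when $p\mid NR$.

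\emph{The local count.} For $p\nmid 2N$ (so $p\nmid N$), the quadratic form $r^2+s^2$ has $1+\chi_4(p)(p-1)$ projective-type roots: $2p-1$ affine zeros if $p\equiv1\pmod4$, and $1$ if $p\equiv 3\pmod 4$. Each linear form $m_ir-n_is$ or $n_ir+m_is$ is nonzero modulo $p$, since its coefficients are not both divisible by $p$ (because $p\nmid m_i^2+n_i^2=N$). Hence each form vanishes on a line through the origin, giving $p-1$ nonzero points.

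\emph{Distinctness of lines.} Suppose $p\nmid R$. The two lines of the $i$-th pair are distinct because $m_i^2+n_i^2\ne0$. For $i\ne j$:
\begin{itemize}
\item the line of $m_ir-n_is$ coincides with that of $m_jr-n_js$ iff $p\mid m_in_j-m_jn_i$;
\item the line of $m_ir-n_is$ coincides with that of $n_jr+m_js$ iff $p\mid m_im_j+n_in_j$.
\end{itemize}
Moreover the lines of the linear forms are not isotropic lines of $r^2+s^2$, again since $p\nmid N$.

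So for $p\nmid 2NR$ we have $\nu_p=1+2k(p-1)+(1+\chi_4(p))(p-1)+O_k(1)$, where the $O_k(1)$ only matters through the overlap at the origin. Hence
\[
1-\frac{\nu_p}{p^2}=1-\frac{2k+1+\chi_4(p)}{p}+O_k(p^{-2}),
\]
and the local factor is $1-\chi_4(p)/p+O_k(p^{-2})$. The product over such $p$ of $(1-\chi_4(p)/p)$ converges, conditionally, to a constant related to $L(1,\chi_4)^{-1}$. To handle this rigorously, note that the product over $p\mid NR$ of $(1-\chi_4(p)/p)^{-1}$ is absorbed into the divisor sums below. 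Multiplying by $\prod_p(1-\chi_4(p)/p)\cdot L(1,\chi_4)$, which is bounded, then yields $\prod_{p\nmid 2NR}(\ldots)\ll_k1$.

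\emph{Bad primes.} For $p\mid NR$, bound trivially $\nu_p\ge 0$ when needed, so the local factor is at most $(1-1/p)^{-(2k+1)}$. This gives $\prod_{p\mid NR,\,p>2}(1-1/p)^{-(2k+1)}\le \prod_{p\mid NR}(1+c_k/p)$, and the small primes $p\le C_k$ contribute $O_k(1)$. For $p>C_k$ we have $(1+c_k/p)\le (1+4k/p)$ after a slight adjustment of constants; an extra factor of $1+O(1/p)$ can be absorbed since $4k\ge 2k+2$ for $k\ge1$.

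\emph{Assembling.} This yields
\[
\mathfrak S_k\ll_k \prod_{p\mid N}\Bigl(1+\frac{4k}{p}\Bigr)\prod_{p\mid R,\,p\nmid N}\Bigl(1+\frac{4k}{p}\Bigr)=\sum_{\ell_1\mid N}\frac{\mu^2(4k)^{\omega}}{\ell_1}\sum_{\ell_2\mid R,(\ell_2,N)=1}\frac{\mu^2(4k)^\omega}{\ell_2}.
\]
Finally, apply $ab\le a^2+b^2$ and then the submultiplicativity $\sum_{\ell\mid M}\mu^2(\ell)(4k)^{\omega(\ell)}/\ell$ squared $\le\sum_{\ell\mid M}\mu^2(\ell)(4k)^{2\omega(\ell)}/\ell$ to convert the product into a sum. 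Here the exponent must be chosen with some slack: start with $2k$ in place of $4k$ in the local bound, and then use $(1+2k/p)^2\le 1+4k/p+O(1/p^2)$.

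\emph{Main obstacle.} The delicate point is this bookkeeping of constants, so that the final exponent is exactly $4k$; working with the weaker $2k+2$ in the local bound gives enough room. The conditional convergence of the good-prime product, handled via $L(1,\chi_4)$, is the other point needing care.
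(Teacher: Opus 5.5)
Citing \cite[(4.10)]{Sabuncu2024} is exactly what the paper does, and your treatment of the good primes $p\nmid 2NR$ is sound (local factor $1-\chi_4(p)/p+O_k(p^{-2})$, normalised through $L(1,\chi_4)$). The self-contained argument, however, fails at the bookkeeping step you flag yourself.

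Passing from the product $ab$ to $a^2+b^2$ doubles the coefficient of $1/p$. So to reach the weights $(4k)^{\omega}$, every bad-prime factor must be at most $1+2k/p+O_k(p^{-2})$, \emph{after} multiplying by the normalising $(1-\chi_4(p)/p)^{-1}$. Your trivial bound gives only $(1-1/p)^{-(2k+1)}$. The normalisation contributes a further $(1-1/p)^{-1}$ at $p\equiv1\pmod 4$, so you get $1+(2k+2)/p+O_k(p^{-2})$, and squaring gives $1+(4k+4)/p+\cdots$. But $\prod_{p\mid N}(1+(2k+2)/p)^2$ is not $\ll_k\prod_{p\mid N}(1+4k/p)$: the ratio is unbounded when $N$ has many small prime factors $p\equiv1\pmod4$. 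The comparison ``$4k\ge 2k+2$'' is the wrong one. As written, your argument proves the lemma only with $(4k+4)^{\omega}$ in place of $(4k)^{\omega}$, and nothing in it supplies the ``$2k$ in the local bound'' you invoke.

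What is missing is an actual count of $\nu_p$ at the bad primes.

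For odd $p\mid N$ there are two cases. If some $(m_i,n_i)\equiv(0,0)\pmod p$ (forced when $p\equiv3\pmod4$), a form vanishes identically, so $\nu_p=p^2$ and the local factor is $0$. Otherwise the zero lines of $m_ir-n_is$ and $n_ir+m_is$ are spanned by $(n_i,m_i)$ and $(m_i,-n_i)$, which are isotropic for $r^2+s^2$. The whole zero set is then the two isotropic lines, so $\nu_p=2p-1$ and the local factor is exactly $(1-1/p)^{-(2k-1)}$. After normalisation this is $(1-1/p)^{-2k}=1+2k/p+O_k(p^{-2})$.

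For $p\mid R$, $p\nmid 2N$, the two lines of any single pair are still distinct and non-isotropic. Hence $\nu_p\ge 1+(3+\chi_4(p))(p-1)$, and the normalised factor is at most $1+(2k-2)/p+O_k(p^{-2})$.

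With these counts, $(1+2k/p)^2=(1+4k/p)(1+O_k(p^{-2}))$, and your AM--GM step gives exactly the stated weights. Downstream, the paper only needs some fixed constant in place of $4k$: Lemma~\ref{lem:friable-SD} allows any $A$, and the $\ell_2$-series in Proposition~\ref{prop:friable-singular-average} converges for any fixed base. So your cruder version would suffice for the applications, but it is not the lemma as stated.

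A minor slip: $r^2+s^2$ has $p+\chi_4(p)(p-1)$ affine zeros modulo $p$, not $1+\chi_4(p)(p-1)$.
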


\begin{proof}
This is \cite[(4.10)]{Sabuncu2024}. 
\end{proof}

We also need the smooth part estimate.

\begin{lem}[Smooth part]
\label{lem:sabuncu-smooth}
Let
\(
    y=x^{1/\log\log x}.
\)
Then, for each fixed integer \(k\geq1\) and any fixed \(A>0\), we have
\[
    \sum_{\substack{n\leq x\\ P^+(n)\leq y}} r_2(n)^k
    \ll_{k,A}
    \frac{x}{(\log x)^A}.
\]
\end{lem}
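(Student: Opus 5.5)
Since $r_2(n)\le r_0(n)$ up to an absolute factor (each representation $n=p^2+q^2$ with primes $p,q$ is a representation as a sum of two squares, and sign/order conventions cost only a constant), we have $r_2(n)^k\ll_k r_0(n)^k$. So it suffices to bound the smooth moment of $r_0$:
\[
\sum_{\substack{n\le x\\ P^+(n)\le y}} r_0(n)^k .
\]
We bound this via Lemma~\ref{lem:friable-SD} with $A=0$ (or any fixed $A$; the extra factor is $\ge1$ and harmless). With $y=x^{1/\log\log x}$ we have $u=\log x/\log y=\log\log x$. We check the admissibility condition $u\le \exp\{(\log y)^{3/5-\varepsilon}\}$: here $\log y=\log x/\log\log x$, so the right side is enormous compared with $\log\log x$.

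Lemma~\ref{lem:friable-SD} then gives
\[
\sum_{\substack{n\le x\\ P^+(n)\le y}} r_0(n)^k\ll_k x(\log y)^{2^{k-1}-1}\rho_{2^{k-1}}(u)\le x(\log x)^{2^{k-1}-1}\rho_{2^{k-1}}(\log\log x).
\]
By the decay estimate \eqref{eqn:dickman-decay} with $\kappa=2^{k-1}$,
\[
\rho_\kappa(u)\ll \exp\{-c_\kappa u\log u+O(u)\}=(\log x)^{-c_\kappa\log\log\log x+O(1)}.
\]
This is smaller than any fixed power of $\log x$. Hence for any fixed $A$ the whole expression is $\ll_{k,A} x(\log x)^{-A}$, since $c_\kappa\log\log\log x$ eventually exceeds $A+2^{k-1}$.

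The only real point to verify is the admissibility of $u=\log\log x$ in Theorem~\ref{lem:TW-cor23}, which is immediate, together with the fact that a super-polynomial-in-$\log$ saving comes out of $\rho_\kappa(\log\log x)$, for which the crude decay \eqref{eqn:dickman-decay} suffices. As an alternative avoiding Tenenbaum--Wu, Rankin's trick would also work: bound the sum by $x^{\sigma}\sum_{P^+(n)\le y} r_0(n)^k n^{-\sigma}$ with $\sigma=1-1/\log y\cdot(\text{suitable }\log u)$, and estimate the Euler product directly. The reduction $r_2\ll r_0$ is trivial, so no step is expected to be a genuine obstacle.
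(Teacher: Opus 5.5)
Your proposal is correct and matches the paper's proof. You use the trivial bound $r_2(n)\le r_0(n)$ (which in fact holds exactly, with no constant). You then apply Lemma~\ref{lem:friable-SD} with $u=\log\log x$, which is admissible, and use the decay \eqref{eqn:dickman-decay} of $\rho_{2^{k-1}}$ to beat any fixed power of $\log x$; your Rankin-trick alternative is essentially Sabuncu's original treatment mentioned in the paper's remark.
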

\begin{proof}
Using the trivial bound \(  r_2(n)\leq r_0(n)\), 
since \(
    u:=\frac{\log x}{\log y}=\log\log x   \leq
    \exp\{(\log y)^{3/5-\varepsilon}\}
\)
for all sufficiently large \(x\), then by Lemma \ref{lem:friable-SD} we have
$$
\sum_{\substack{n\leq x\\ P^+(n)\leq y}} r_0(n)^k \ll_k x(\log y)^{2^{k-1}-1}\rho_{2^{k-1}}(u).
$$
Hence we finish the proof by \eqref{eqn:dickman-decay}, the rapid decay of the function $\rho_{\kappa}(u)$.
\end{proof}

\begin{rem}
For the smooth part, Sabuncu proves a weaker but sufficient estimate of the shape
\(    \ll_k x(\log x)^{-4}
\)
by a Rankin-trick argument in \cite[Section 5]{Sabuncu2024}. 
\end{rem}

The large-prime range is handled exactly as in Sabuncu.

\begin{lem}[Large-prime contribution]
\label{lem:sabuncu-large}
For each fixed \(k\geq2\),
\[
    \sum_{\substack{(\vec m,\vec n)\in\mathcal A\\N\le x^{2/3}}}
    f_k\!\left(\frac{x}{N};\vec m,\vec n\right)
    \ll_k
    x(\log x)^{2^{k-1}-2k-1}.
\]
\end{lem}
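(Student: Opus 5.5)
We bound each $f_k(x/N;\vec m,\vec n)$ by the Selberg sieve and then average the resulting singular series over tuples with $N\le x^{2/3}$. Since $z=x/N\ge x^{1/3}$, we have $\log z\asymp\log x$. Lemma~\ref{lem:sabuncu-sieve} therefore gives
\[
\sum_{\substack{(\vec m,\vec n)\in\mathcal A\\ N\le x^{2/3}}} f_k\!\left(\frac xN;\vec m,\vec n\right)\ll_k \frac{x}{(\log x)^{2k+1}}\sum_{\substack{(\vec m,\vec n)\in\mathcal A\\ N\le x^{2/3}}}\frac{\mathfrak S_k(\vec m,\vec n)}{N}.
\]
It thus suffices to show that the last sum is $\ll_k(\log x)^{2^{k-1}}$. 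By Lemma~\ref{lem:sabuncu-singular-series}, we split $\mathfrak S_k$ into the part $\Sigma_1$ from divisors $\ell_1\mid N$ and the part $\Sigma_2$ from divisors $\ell_2\mid R$ with $(\ell_2,N)=1$.

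For $\Sigma_1$ the summand depends only on $N$. Lemma~\ref{lem:sabuncu-A-count} then reduces it to
\[
\sum_{N\le x^{2/3}}\frac{r_0(N)^k}{N}\,g(N),\qquad g(N):=\sum_{\ell\mid N}\frac{\mu^2(\ell)(4k)^{\omega(\ell)}}{\ell}=\prod_{p\mid N}\Bigl(1+\frac{4k}{p}\Bigr).
\]
This is exactly the weight in Lemma~\ref{lem:friable-SD} with $A=4k$ and $y=x$, i.e.\ $u=1$. That lemma gives $\sum_{N\le t}r_0(N)^kg(N)\ll t(\log t)^{2^{k-1}-1}$. Alternatively, one can use a plain Selberg--Delange or Shiu bound, since the multiplicative function has mean $2^{k-1}$ on primes. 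Partial summation then yields $\Sigma_1\ll(\log x)^{2^{k-1}}$.

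The second part $\Sigma_2$ is the main obstacle, because $R(\vec m,\vec n)$ is not a function of $N$. We interchange the sums: for squarefree $\ell$ with $(\ell,N)=1$ and $\ell\mid R$, each prime $p\mid\ell$ divides some factor $m_in_j-m_jn_i$ or $m_im_j+n_in_j$. We bound $\mathbf 1_{\ell\mid R}$ by a sum over factorisations $\ell=\prod_{i<j}d_{ij}e_{ij}$ with $d_{ij}\mid m_in_j-m_jn_i$ and $e_{ij}\mid m_im_j+n_in_j$. Each prime divisor of $\ell$ has at most $k(k-1)$ choices of factor, and this loss is absorbed into $(4k)^{\omega(\ell)}$ after enlarging the constant.

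Now fix a pair $(i,j)$ and a divisor $d$ of one of these factors. In Gaussian terms, $m_in_j-m_jn_i$ and $m_im_j+n_in_j$ are the imaginary and real parts of $\overline{\alpha_i}\alpha_j$, where $\alpha_i=m_i+in_i$ and $|\alpha_i|^2=|\alpha_j|^2=N$. Following Sabuncu's condition-separation argument, we parametrise the pair $(\alpha_i,\alpha_j)$ through $\gamma=\overline{\alpha_i}\alpha_j=a+bi$ with $a^2+b^2=N^2$. The condition $d\mid a$ or $d\mid b$ is then exactly the setting of \eqref{lem:sabuncu-lemma-3-4}, which saves a factor $d^{-1/2}$. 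The remaining components are counted by $r_0(N)^{k-2}$-type factors, bounded by powers of $r_0(a^2+b^2)$ since $r_0(N)\le r_0(N^2)$.

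Applying \eqref{lem:sabuncu-lemma-3-4} with $y=x$ and partial summation over $N\le x^{2/3}$ gives, for each $\ell$, a contribution $\ll(\log x)^{2^{k-1}}\ell^{-1/2+o(1)}$. We must check that the exponent of the logarithm does not exceed $2^{k-1}$ after converting from $N^2$ back to $N$; this is the point of the normalisation in Sabuncu's lemma. The sum $\sum_\ell\mu^2(\ell)C^{\omega(\ell)}\ell^{-3/2+o(1)}$ then converges, so $\Sigma_2\ll(\log x)^{2^{k-1}}$.

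Since the large range carries no friability restriction, no Dickman factor is needed and the argument is exactly Sabuncu's. Combining the bounds for $\Sigma_1$ and $\Sigma_2$ gives the lemma. The delicate step is the bookkeeping in the reduction of $\Sigma_2$ to \eqref{lem:sabuncu-lemma-3-4}; if it produced a larger power of $\log x$, I would instead apply Cauchy--Schwarz, separating $r_0(N)^{2k}$ from the divisor-counting condition.
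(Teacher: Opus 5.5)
Your sieve reduction (using $\log(x/N)\ge\frac13\log x$) and the split of $\mathfrak S_k$ into $\Sigma_1+\Sigma_2$ are exactly the paper's. Your bound for $\Sigma_1$, via Lemma~\ref{lem:friable-SD} at $u=1$ plus partial summation, is a correct self-contained replacement for the paper's citation of Sabuncu's Section~6.

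The gap is in $\Sigma_2$. The paper does not reprove this part; it cites Sabuncu's Section~7, and your reconstruction of that argument fails at the key step. The Gaussian integers $\gamma=\overline{\alpha_i}\alpha_j=a+bi$ all satisfy $a^2+b^2=N^2$, a perfect square. But \eqref{lem:sabuncu-lemma-3-4} sums over \emph{all} lattice points in a disc and cannot see this sparse constraint. If you drop it, the $1/N$-weighted count becomes essentially $\sum_{|\gamma|\le x^{2/3},\,d\mid a}|\gamma|^{-1}r_0(|\gamma|^2)^{k-1}$. That is of size $x^{2/3}d^{-1/2}$ times logarithms, so you lose a power of $x$. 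This is not a matter of rewriting $\log(N^2)$ as $2\log N$. So the claim that ``the condition $d\mid a$ is exactly the setting of \eqref{lem:sabuncu-lemma-3-4}'' is false as stated.

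The missing idea is the Gaussian gcd structure of two elements of equal norm. Put $g=\gcd(\alpha_i,\alpha_j)$. Then $\alpha_i=g\beta$ and $\alpha_j=\epsilon g\overline{\beta}$ for a unit $\epsilon$, so $\overline{\alpha_i}\alpha_j=\epsilon N(g)\overline{\beta}^{\,2}$. Writing $\beta=c+ei$, the two factors of $R$ attached to $(i,j)$ are, up to sign and order, $N(g)(c^2-e^2)$ and $2N(g)ce$. Since $N(g)\mid N$, the hypothesis $(\ell_2,N)=1$ removes the factor $N(g)$ (apart from the harmless prime $2$); note your argument never uses this hypothesis. Splitting $d$ between $c$ and $e$ (or between $c+e$ and $c-e$) then gives a genuine coordinate-divisibility condition on the free variable $\beta$. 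Now \eqref{lem:sabuncu-lemma-3-4} applies to the $\beta$-sum. The sums over $g$ and over the other $k-2$ components supply the complementary logarithmic factor, and the exponents recombine to give $(\log x)^{2^{k-1}}$ after the final partial summation.

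Separately, your claimed per-$\ell$ saving $\ell^{-1/2+o(1)}$ is too optimistic. Conditions from different pairs $(i,j)$ share variables and cannot be exploited simultaneously, so you can only use the largest piece of $\ell$. That yields a saving $\ell^{-\eta_k}$ for some small $\eta_k>0$, as in Lemma~\ref{lem:sabuncu-pair-medium}. This is still enough for $\sum_\ell\mu^2(\ell)C^{\omega(\ell)}\ell^{-1-\eta_k}$ to converge, so that overstatement is harmless.
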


\begin{proof}
By Lemma \ref{lem:sabuncu-sieve}, since \(N\leq x^{2/3}\), one has
\[
\begin{aligned}
    \sum_{\substack{(\vec m,\vec n)\in\mathcal A\\N\le x^{2/3}}}
    f_k\!\left(\frac{x}{N};\vec m,\vec n\right)
    &\ll_k
    \frac{x}{(\log x)^{2k+1}}
    \sum_{\substack{(\vec m,\vec n)\in\mathcal A\\N\le x^{2/3}}}
    \frac{\mathfrak S_k(\vec m,\vec n)}{N}.
\end{aligned}
\]
It remains to estimate the singular-series sum. Splitting the singular series
by Lemma \ref{lem:sabuncu-singular-series}, the \(\ell_1\mid N\) part is
bounded by Sabuncu's argument in \cite[Section 6]{Sabuncu2024}; the
\(\ell_2\mid R(\vec m,\vec n)\) part is bounded by the condition separation
argument in \cite[Section 7]{Sabuncu2024}. Both contributions are
\[
    \ll_k(\log x)^{2^{k-1}}.
\]
This gives the lemma.
\end{proof}

We isolate the condition-separated divisor-saving estimate that will be used in the medium-prime range. For a fixed \(\ell\) in this range, put
\[
    \mathcal A_\ell
    :=
    \left\{
    (\vec m,\vec n)\in\mathcal A:
    N\leq x/e^\ell,\ P^+(N)\leq e^{\ell+1}
    \right\}.
\]
Write \(K=\binom{k}{2}\), and let \(\tau_K\) denote the \(K\)-fold divisor function.

\begin{lem}
\label{lem:sabuncu-pair-medium}
Let \(k\geq2\) be fixed, and let \(d\in\mathbb N\) squarefree. Then uniformly for
\(
    \frac{\log x}{\log\log x}\leq \ell\leq \frac13\log x
\)
we have
   $$\sum_{\substack{(\vec m,\vec n)\in\mathcal A_{\ell}\\d\mid R(\vec m,\vec n) }}1 \ll_k \frac{x}{e^\ell}\ell^{2^{k-1}-1}\frac{\tau_K(d)}{d^{1/(32K)}}.$$
\end{lem}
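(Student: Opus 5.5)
Since $d$ is squarefree and divides $R(\vec m,\vec n)=\prod_{i<j}(m_in_j-m_jn_i)(m_im_j+n_in_j)$, we can factor $d=\prod_{i<j}d_{ij}$ with $d_{ij}\mid (m_in_j-m_jn_i)(m_im_j+n_in_j)$. The number of such factorisations is at most $\tau_K(d)$. Multiplying by a further factor $2^{\omega(d_{ij})}\le\tau(d)$, we may also split each $d_{ij}=d_{ij}^-d_{ij}^+$ with $d_{ij}^-\mid m_in_j-m_jn_i$ and $d_{ij}^+\mid m_im_j+n_in_j$; this extra divisor-type factor costs only $d^{\epsilon}$ and can be absorbed by shrinking the exponent. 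By pigeonhole there is a pair $(i,j)$ with $d_{ij}\ge d^{1/K}$, and hence one of $d_{ij}^\pm$ is at least $d^{1/(2K)}$. It therefore suffices to bound, for a single pair $(1,2)$ after relabelling and a single divisor $e\ge d^{1/(2K)}$, the number of tuples in $\mathcal A_\ell$ with $e\mid m_1n_2-m_2n_1$ (the case $e\mid m_1m_2+n_1n_2$ is identical after replacing $(m_2,n_2)$ by $(n_2,-m_2)$).

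Next I pass to Gaussian integers. Put $\alpha=m_1+in_1$ and $\beta=m_2+in_2$. Then $\bar\alpha\beta=(m_1m_2+n_1n_2)+i(m_1n_2-m_2n_1)$, and $|\alpha|=|\beta|$. The idea is to write $\beta=\alpha\cdot u\bar\gamma/\gamma$-type parametrisation, which is the same Gaussian factorisation used in Sabuncu's condition-separation. Concretely, there are a unit $\epsilon$ and Gaussian integers $\gamma,\delta,\lambda$ such that $\alpha=\lambda\gamma\delta$ and $\beta=\epsilon\lambda\gamma\bar\delta$. Then $\operatorname{Im}(\bar\alpha\beta)$ becomes $|\lambda\gamma|^2\operatorname{Im}(\epsilon\bar\delta^2)$, so it is the product of the integer $a=|\lambda\gamma|^2$ with a quadratic form in the coordinates of $\delta=r+is$, essentially $2rs$ or $r^2-s^2$. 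Splitting $e=e_1e_2$ with $e_1\mid a$ and $e_2\mid rs$ or $r^2-s^2$ (again at a cost of $\tau(e)$), one of $e_1,e_2$ is at least $e^{1/2}$. The remaining tuple entries $(m_i,n_i)$ for $i\ge3$ are counted by Lemma~\ref{lem:sabuncu-A-count} as $\ll r_0(N)^{k-2}$, where $N=a|\delta|^2\le x/e^\ell$. The count is thus bounded by a sum of the form $\sum r_0(N)^{k-1}$, up to factors $r_0$, over $N=a\cdot(r^2+s^2)$ with $P^+(N)\le e^{\ell+1}$, subject to a divisibility condition of size $\ge d^{1/(4K)}$ on either $a$ or on a coordinate of $\delta$.

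I then apply \eqref{lem:sabuncu-lemma-3-4}. When the divisor condition falls on $r$ (or $s$, or on $r\pm s$ after a linear change of variables), I sum over $a$ outside, bound $r_0(a|\delta|^2)\le r_0(a)r_0(|\delta|^2)$, and apply \eqref{lem:sabuncu-lemma-3-4} with $y=e^{\ell+1}$ and $x$ replaced by $x/(e^\ell a)$. This gives $d^{-1/(8K)}\cdot\frac{x}{e^\ell a\log(x/e^\ell a)}\ell^{2^{k-1}}$. Since $\ell\le\frac13\log x$, we have $\log(x/e^\ell a)\gg\ell$ for most $a$, so this is $\ll\frac{x}{e^\ell a}\ell^{2^{k-1}-1}d^{-1/(8K)}$; the large-$a$ tail is treated separately. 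The outer sum $\sum_a r_0(a)^{O(1)}/a$ is over the friable $a$. This is the main obstacle, because a naive bound $\ell^{O(1)}$ would destroy the exponent. I would instead avoid separating the variables. I would retain $\sum_{e_1\mid a}$ jointly and use Lemma~\ref{lem:friable-SD}: the weighted friable sum of $r_0^{k-1}$ over $N\le x/e^\ell$ with $P^+(N)\le e^{\ell+1}$ has $u\approx\log x/\ell$ bounded below by $3$ and above by $\log\log x$. Lemma~\ref{lem:friable-SD} then gives $\frac{x}{e^\ell}\ell^{2^{k-2}-1}$-type savings, where I use $\rho\le1$ only. When the condition is $e_1\mid a$ with $e_1\ge e^{1/2}$, I write $a=e_1a'$, use $r_0(e_1a')\le r_0(e_1)r_0(a')$, and sum $r_0(e_1)^{O(1)}/e_1\le\tau(d)^{O(1)}d^{-1/(4K)}$. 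The inner sum is again handled by Lemma~\ref{lem:friable-SD} or \eqref{lem:sabuncu-lemma-3-4} with $d=1$.

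Collecting, each case gives $\ll\frac{x}{e^\ell}\ell^{2^{k-1}-1}\tau_K(d)\,d^{-1/(16K)}d^{\epsilon}$. Absorbing the auxiliary $\tau(d)^{O(1)}\ll d^{1/(32K)}$ into the power saving yields the stated exponent $1/(32K)$. The delicate point throughout is making the log-power accounting give exactly $\ell^{2^{k-1}-1}$: the $r_0^{k-1}$ moment of a product of two norm variables must recombine to the $2^{k-1}$ exponent of \eqref{lem:sabuncu-lemma-3-4}, rather than to a product of two separate such factors.
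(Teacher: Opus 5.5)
Your skeleton is the condition-separation route that the paper imports from Sabuncu: distribute $d$ over the $K$ factors of $R(\vec m,\vec n)$ at cost $\tau_K(d)$, pigeonhole to one pair, and reduce to a divisibility condition on a single coordinate that is fed into \eqref{lem:sabuncu-lemma-3-4}. Your parametrisation $\alpha=g\delta$, $\beta=\epsilon g\bar\delta$ with $g=\gcd(\alpha,\beta)$, giving $m_1n_2-m_2n_1=|g|^2\operatorname{Im}(\epsilon\bar\delta^2)$, is a sound way to carry this out. The gap is in the decisive case, where the large divisor $f$ lands on a coordinate of $\delta$. Sum over the $\le 4r_0(a)$ choices of $g$ with $|g|^2=a$ and the $\ll r_0(N)^{k-2}$ remaining components; the weight is then $\ll r_0(a)^{k-1}r_0(|\delta|^2)^{k-2}$. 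So \eqref{lem:sabuncu-lemma-3-4} must be applied to the $\delta$-sum with $k-2$ in place of $k$, and it returns $\ell^{2^{k-2}}$, not the $\ell^{2^{k-1}}$ you wrote. That wrong exponent is what creates your ``main obstacle'' with the $a$-sum. The remedy you propose does not work either: Lemma~\ref{lem:friable-SD} is a mean value of a multiplicative function of $N$ and cannot see the condition $f\mid r$, so using it ``jointly'' gives no power of $d$ at all. As written, this case yields no saving.

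The fix is to keep the variables separated. Put $Z=x/(e^\ell a)$. If $Z\ge e^{\ell+1}$, then \eqref{lem:sabuncu-lemma-3-4} gives $\ll \frac{Z}{\log Z}\ell^{2^{k-2}}f^{-1/2}\ll Z\ell^{2^{k-2}-1}f^{-1/2}$. If $Z<e^{\ell+1}$, the friability condition is vacuous and taking $y=Z$ gives the same bound, so there is no tail to treat separately. Next, $\sum_{P^+(a)\le e^{\ell+1}}r_0(a)^{k-1}/a\ll\ell^{2^{k-2}}$ by Mertens, because $r_0(p)^{k-1}=2^{k-1}$ for $p\equiv1\pmod 4$ and $r_0(p)=0$ for $p\equiv3\pmod 4$. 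Since $\ell^{2^{k-2}-1}\cdot\ell^{2^{k-2}}=\ell^{2^{k-1}-1}$, the two separate factors recombine exactly, contrary to your closing remark.

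There are two smaller points. First, $r_0$ is not submultiplicative at primes $\equiv3\pmod4$: $r_0(9)=1$ but $r_0(3)=0$. So in the case $e_1\mid a$, replace $r_0$ by the majorant $\prod_{p^j\Vert n,\ p\equiv1\ (\mathrm{mod}\ 4)}(j+1)$. The inequality $r_0(a|\delta|^2)\le r_0(a)r_0(|\delta|^2)$ is fine, since $g=\gcd(\alpha,\beta)$ forces $|\delta|^2$ to have only prime factors $\equiv1\pmod4$. Second, write $\alpha_i=m_i+in_i$. Tuples with $\alpha_j\in\{-\alpha_i,\pm i\alpha_i\}$ for some $i\ne j$ (for the chosen pair, this is exactly the case $\delta$ a unit) have $R(\vec m,\vec n)=0$. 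They are therefore counted for every $d$, and no $d$-saving is possible for them. The statement has to be read with $R\neq0$, as the paper tacitly does; otherwise $M_2(\ell)$ diverges. This restriction is harmless, because such tuples force some $p_j$ or $q_j$ to be negative and so contribute nothing to $\mathcal S_k(x)$.
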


\begin{proof}
This is extracted from Sabuncu's medium-range treatment of \(M'_2\) in \cite[Section 8]{Sabuncu2024}. More precisely, the condition separation is the same as in \cite[(7.1)--(7.2)]{Sabuncu2024}; the mean-value input is \cite[Lemma 3.4 and (3.2)]{Sabuncu2024}. 
\end{proof}

We now average the singular series over the friable base tuples in the
medium-prime range. 
By Lemma \ref{lem:sabuncu-singular-series}, the singular series average over
\(\mathcal A_\ell\) is bounded by two contributions. We write
\[
    M_1(\ell)
    :=
    \sum_{(\vec m,\vec n)\in\mathcal A_\ell}
    \sum_{\ell_1\mid N}
    \frac{\mu^2(\ell_1)(4k)^{\omega(\ell_1)}}{\ell_1}
\]
and
\[
   M_2(\ell)
    :=
    \sum_{(\vec m,\vec n)\in\mathcal A_\ell}
    \sum_{\substack{\ell_2\mid R(\vec m,\vec n)\\(\ell_2,N)=1}}
    \frac{\mu^2(\ell_2)(4k)^{\omega(\ell_2)}}{\ell_2}
    =
    \sum_{\ell_2\geq1}
    \frac{\mu^2(\ell_2)(4k)^{\omega(\ell_2)}}{\ell_2}
    \sum_{\substack{(\vec m,\vec n)\in\mathcal A_\ell\\
                    \ell_2\mid R(\vec m,\vec n)\\
                    (\ell_2,N)=1}}
    1.
\]
Thus
\[
    \sum_{(\vec m,\vec n)\in\mathcal A_\ell}
    \mathfrak S_k(\vec m,\vec n)
    \ll_k
    M_1(\ell)+M_2(\ell).
\]

\begin{prop}[Friable average of the singular series]
\label{prop:friable-singular-average}
Let $k\geq 2$ be fixed. Write $u_{\ell} := \frac{\log(x/e^\ell)}{\ell+1}$. Then uniformly for $\frac{\log x}{\log\log x}\leq \ell\leq \frac13\log x$, we have
\begin{equation*}
\sum_{(\vec m,\vec n)\in\mathcal A_\ell} \mathfrak S_k(\vec m,\vec n) \ll_k \frac{x}{e^\ell}\ell^{2^{k-1}-1}\Phi_k(u_\ell),
\end{equation*}
where \(\Phi_k(u)\) is a positive function depending only on \(k\), which satisfies the rapid decay estimate $\Phi_k(u) \ll \exp(-c_k u \log (u+2))$ for some absolute constant $c_k > 0$.
\end{prop}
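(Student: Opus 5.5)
We bound $M_1(\ell)$ and $M_2(\ell)$ separately, since $\sum_{(\vec m,\vec n)\in\mathcal A_\ell}\mathfrak S_k\ll_k M_1(\ell)+M_2(\ell)$. Throughout, $X:=x/e^\ell$ and $Y:=e^{\ell+1}$, so that $\log X/\log Y=u_\ell$. In the stated range of $\ell$ we have $u_\ell\le \log\log x+O(1)$, which is far inside the Tenenbaum--Wu range $u\le\exp\{(\log Y)^{3/5-\varepsilon}\}$.

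\emph{The term $M_1(\ell)$.} Swap the order: by Lemma~\ref{lem:sabuncu-A-count},
\[
M_1(\ell)\ll_k \sum_{\substack{N\le X\\ P^+(N)\le Y}} r_0(N)^k\sum_{\ell_1\mid N}\frac{\mu^2(\ell_1)(4k)^{\omega(\ell_1)}}{\ell_1}
=\sum_{\substack{N\le X\\ P^+(N)\le Y}} r_0(N)^k\prod_{p\mid N}\Bigl(1+\frac{4k}{p}\Bigr).
\]
This is exactly the sum in Lemma~\ref{lem:friable-SD} with $A=4k$. It is therefore $\ll_k X(\log Y)^{2^{k-1}-1}\rho_{2^{k-1}}(u_\ell)\ll X\ell^{2^{k-1}-1}\rho_{2^{k-1}}(u_\ell)$. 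By \eqref{eqn:dickman-decay}, this term has the required Dickman decay.

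\emph{The term $M_2(\ell)$.} This is the main obstacle. Lemma~\ref{lem:sabuncu-pair-medium} gives the $d$-saving but no Dickman factor, while the friable Selberg--Delange lemma gives the Dickman factor but no $d$-saving. The plan is to interpolate between them with H\"older's inequality (or Cauchy--Schwarz) on the inner count. Drop $(\ell_2,N)=1$ and write $\mathbf 1_{\ell_2\mid R}=\mathbf 1_{\ell_2\mid R}^{1/2}\cdot 1$. Then
\[
\#\{(\vec m,\vec n)\in\mathcal A_\ell:\ell_2\mid R\}\le \#\{\ell_2\mid R\}^{1/2}\,\#\mathcal A_\ell^{1/2}.
\]
The first factor is $\ll (X\ell^{2^{k-1}-1})^{1/2}\tau_K(\ell_2)^{1/2}\ell_2^{-1/(64K)}$ by Lemma~\ref{lem:sabuncu-pair-medium}. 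The second factor is $\ll (X\ell^{2^{k-1}-1}\rho_{2^{k-1}}(u_\ell))^{1/2}$ by Lemma~\ref{lem:sabuncu-A-count} and Lemma~\ref{lem:friable-SD} with $A=0$. Multiplying, each inner count is $\ll X\ell^{2^{k-1}-1}\rho_{2^{k-1}}(u_\ell)^{1/2}\tau_K(\ell_2)^{1/2}\ell_2^{-1/(64K)}$. Summing over $\ell_2$ against $\mu^2(\ell_2)(4k)^{\omega(\ell_2)}/\ell_2$ then converges absolutely, since $\sum_{\ell_2}(4k)^{\omega(\ell_2)}\tau_K(\ell_2)^{1/2}\ell_2^{-1-1/(64K)}\ll_k 1$. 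Hence $M_2(\ell)\ll_k X\ell^{2^{k-1}-1}\rho_{2^{k-1}}(u_\ell)^{1/2}$.

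\emph{Conclusion.} Set $\Phi_k(u):=\rho_{2^{k-1}}(u)+\rho_{2^{k-1}}(u)^{1/2}$; to keep $\Phi_k$ positive, add $\exp(-u\log(u+2))$ if needed. By \eqref{eqn:dickman-decay}, $\rho_{2^{k-1}}(u)^{1/2}\ll\exp(-\tfrac{c}{2}u\log(u+2)+O(u))$. Absorbing the $O(u)$ term into a smaller constant is harmless for $u\ge u_0$, and for bounded $u$ the claimed bound is trivial after adjusting the implied constant. This gives the proposition with some $c_k>0$.

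The step to double-check is that Lemma~\ref{lem:sabuncu-pair-medium} really applies with $\ell_2$ in place of $d$ and with no further restrictions. The removed coprimality condition only enlarges the count, so dropping it is safe.
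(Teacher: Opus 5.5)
Your proposal is correct and follows essentially the same argument as the paper. You treat $M_1(\ell)$ via Lemma~\ref{lem:sabuncu-A-count} and Lemma~\ref{lem:friable-SD} with $A=4k$. For $M_2(\ell)$ you drop $(\ell_2,N)=1$ and take the geometric mean of the divisor-saving bound from Lemma~\ref{lem:sabuncu-pair-medium} and the friable bound for $\#\mathcal A_\ell$ (your Cauchy--Schwarz step is exactly this), arriving at the same $\Phi_k=\rho_{2^{k-1}}+\rho_{2^{k-1}}^{1/2}$, and your checks on the admissible range of $u_\ell$ and on absorbing the $O(u)$ term in \eqref{eqn:dickman-decay} are correct details the paper leaves implicit.
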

\begin{proof}
We will show $\Phi_k(u)= \rho_{2^{k-1}}(u) +  \sqrt{\rho_{2^{k-1}}(u)}$. Let's evaluate the contributions of $M_1(\ell)$ and $M_2(\ell)$ in sequence.
Using Lemma \ref{lem:sabuncu-A-count}, and applying Lemma \ref{lem:friable-SD} with \(A=4k\), \(x\) replaced by
\(x/e^\ell\), and \(y=e^{\ell+1}\), we have
\begin{equation}    \label{eq:N1-bound-short}
\begin{aligned}
    M_1(\ell)      \ll_k
    \sum_{\substack{N\leq x/e^\ell\\P^+(N)\leq e^{\ell+1}}}
    r_0(N)^k
    \prod_{p\mid N}\left(1+\frac{4k}{p}\right) \ll_k
    \frac{x}{e^\ell}\ell^{2^{k-1}-1}
    \rho_{2^{k-1}}\!\left(u_{\ell}\right).
\end{aligned}
\end{equation}

Turning to $M_2(\ell)$. For upper bounds we may drop the condition
\((\ell_2,N)=1\). By Lemma \ref{lem:sabuncu-pair-medium}, for every
\(\ell_2\geq1\),
\begin{equation}\label{eq:pair-rough-short}
     \sum_{\substack{(\vec m,\vec n)\in\mathcal A_\ell\\
                    \ell_2\mid R(\vec m,\vec n)}}
    1
    \ll_k
    \frac{x}{e^\ell}\ell^{2^{k-1}-1}
    \frac{\tau_K(\ell_2)}{\ell_2^{1/(32K)}}.
\end{equation}
On the other hand, after dropping the condition
\(\ell_2\mid R(\vec m,\vec n)\), using
Lemma \ref{lem:sabuncu-A-count}, and applying Lemma \ref{lem:friable-SD}
with \(A=0\), we get
\begin{equation}
\sum_{\substack{(\vec m,\vec n)\in\mathcal A_{\ell}}}
1  \ll_k \sum_{\substack{N\le x/e^{\ell}\\P^+(N)\le e^{\ell+1}}}r_0(N)^k                                                      \ll_k
    \frac{x}{e^\ell}\ell^{2^{k-1}-1}
    \rho_{2^{k-1}}\!\left(u_{\ell}\right).
    \label{eq:pair-friable-short}
\end{equation}
Taking the geometric mean of \eqref{eq:pair-rough-short} and
\eqref{eq:pair-friable-short}, we find 
\[
    \sum_{\substack{(\vec m,\vec n)\in\mathcal A_\ell\\
                    \ell_2\mid R(\vec m,\vec n)}}
    1
    \ll_k
    \frac{x}{e^\ell}\ell^{2^{k-1}-1}
    \rho_{2^{k-1}}(u_\ell)^{1/2}
    \frac{\tau_K(\ell_2)^{1/2}}{\ell_2^{1/(64K)}}.
\]
Therefore
\[
\begin{aligned}
M_2(\ell)&\ll_k
\frac{x}{e^\ell}\ell^{2^{k-1}-1}
\rho_{2^{k-1}}\!\left(u_{\ell}\right)^{1/2}
\sum_{\ell_2\geq 1}
\frac{\mu^2(\ell_2)(4k)^{\omega(\ell_2)}
      \tau_K(\ell_2)^{1/2}}
     {\ell_2^{1+1/(64K)}}.
\end{aligned}
\]
The last Euler product converges absolutely. Combining this with
\eqref{eq:N1-bound-short} proves the proposition.
\end{proof}

\section{Proof of Theorem \ref{thm:main}: the Upper Bound}
\label{sec:proof-main-upper}

We now prove  Proposition \ref{prop:factorial-upper} and hence Theorem \ref{thm:main} follows.
The case \(k=1\) is the first moment estimate. Hence we assume \(k\geq2\).

Let
\(    y:=x^{1/\log\log x}.
\)
We follow Sabuncu's largest-prime-factor decomposition. Splitting according to the size of \(P^+(n)\), we write
\[
    \mathcal S_k(x)
    \leq
    \mathcal S_k^{\mathrm{sm}}(x)
    +
    \mathcal S_k^{\mathrm{lg}}(x)
    +
    \mathcal S_k^{\mathrm{med}}(x),
\]
where the three pieces correspond respectively to
\[
    P^+(n)\leq y,\qquad
    P^+(n)>x^{1/3},\qquad
    y<P^+(n)\leq x^{1/3}.
\]
The smooth part is negligible by Lemma
\ref{lem:sabuncu-smooth}.
The large-prime range is treated by Lemma
\ref{lem:sabuncu-large}, which gives the correct order of upper bound. 

It remains to deal with the medium-prime range. We decompose the range of $P^+(n)$ into $e$-adic intervals of the form $(e^\ell, e^{\ell+1}]$, with $\log y \leq \ell \leq \frac{1}{3}\log x$. For a fixed interval, by Lemma \ref{lem:sabuncu-sieve}, the number of pairs $(r,s)$ is bounded by
\[
    f_k(e^{\ell+1}, (\vec{m}, \vec{n})) \ll_k \mathfrak{S}_k(\vec{m},\vec{n}) \frac{e^{\ell+1}}{(\log(e^{\ell+1}))^{2k+1}}.
\]
Summing over all $e$-adic intervals gives
\[
\begin{aligned}
     \mathcal{S}_k^{\mathrm{med}}(x) &\ll_k \sum_{\log y \leq \ell \leq \frac{1}{3}\log x} \frac{e^\ell}{\ell^{2k+1}} \sum_{\substack{(\vec{m},\vec{n}) \in \mathcal{A}_{\ell}}} \mathfrak{S}_k(\vec{m},\vec{n})\\
     & \ll_k x \sum_{\log y \leq \ell \leq \frac{1}{3}\log x} \ell^{2^{k-1}-2k-2} \Phi_k(u_{\ell}),
\end{aligned}
\]
where we used Proposition \ref{prop:friable-singular-average} in the second line. Note that \(\Phi_k(u)\ll_k e^{-c_k u\log u}\ll_{k,B}u^{-B}\) for \(u\geq2\) and any fixed $B>0$, and
$$u_\ell = \frac{\log(x/e^\ell)}{\log(e^{\ell+1})} =\frac{\log x}{\ell} - 1 + O\left(\frac{\log x}{\ell^2} \right)\asymp \frac{\log x}{\ell}$$
uniformly for $\ell\leq \frac{1}{3}\log x$.
Then by taking $B$ large enough, we have
\begin{align*}
    \mathcal S_k^{\mathrm{med}}(x) &\ll_k x(\log x)^{-B}\sum_{\ell\leq \frac{1}{3}\log x}\ell^{2^{k-1}-2k-2+B}
    \ll_k x(\log x)^{2^{k-1}-2k-1}.
\end{align*}
Combining the estimates for the smooth, medium, and large ranges completes the proof.

\section{Proof of Theorem \ref{thm:main}:  the Lower Bound}\label{sec:pf-main-lower}
We now establish the lower bound \eqref{eq:Mixed low bound}, which we restate as follows:
\[
\mathcal{M}_k(x):=\sum_{n\le x} R_2(n)r_0(n)^{k-1}\gg_k x(\log x)^{2^{k-1}-3}.
\]
Let $P = \sqrt{x}$, $I = [P/4, P/3]$, and $M = P^{1/3}$. To ensure $n \le x$, we restrict our attention to integers of the form $n = p^2 + q^2$ with $p, q \in \mathcal{P} \cap I$, since $p^2+q^2 \le 2(P/3)^2 < P^2 = x$. This gives the initial lower bound
\begin{equation}\label{eqn:initial-lower-bound-of-M}
        \mathcal{M}_k(x) \ge \sum_{p,q\in\mathcal P\cap I} r_0(p^2+q^2)^{k-1}.
\end{equation}

Recall that \(r_0(n) = \sum_{d \mid n} \chi_4(d)\) is a non-negative multiplicative function. For \(n = p^2+q^2\) where \(p\) and \(q\) are primes, we have
\begin{align*}
    r_0(p^2+q^2)^s &= \Bigg(\prod_{\ell^a \parallel p^2+q^2} \sum_{j=0}^{a} \chi_4(\ell^j)\Bigg)^s \ge \Bigg(\prod_{\substack{\ell \mid p^2+q^2 \\ \ell \equiv 1 \mod 4}} 2\Bigg)^s 
    =  \prod_{\substack{\ell \mid p^2+q^2 \\ \ell \equiv 1 \mod 4}} \bigl(1 + (2^s-1)\bigr)
    \\
    &= \sum_{\substack{m \mid p^2+q^2 \\ m\ \mathrm{squarefree} \\ \ell \mid m \implies \ell \equiv 1 \mod 4}} (2^s-1)^{\omega(m)}
    \ge
    \sum_{\substack{m\mid p^2+q^2\\ m\le M,\ m\ \mathrm{squarefree}\\\ell\mid m\implies \ell\equiv1\mod4 }}
    (2^s-1)^{\omega(m)}
\end{align*}
Substituting this with $s=k-1$ into \eqref{eqn:initial-lower-bound-of-M} and interchanging the order of summation yields
\begin{align}\label{eqn:Nm-occurs}
    \mathcal{M}_k(x)
    \ge \sum_{\substack{m \le M, \, \mu^2(m)=1 \\ \ell \mid m \implies \ell \equiv 1 \mod 4}} (2^{k-1}-1)^{\omega(m)} N_m,
\end{align}
where $ N_m
    :=
    \#\{(p,q)\in(\mathcal P\cap I)^2:m\mid p^2+q^2\}.$
By Lemma \ref{lem:Nm-count}, the inner count is given by $N_m = 2^{\omega(m)}\frac{\Pi^2}{\varphi(m)} + O\bigl(2^{\omega(m)}\Pi E(m)\bigr)$. Inserting this into \eqref{eqn:Nm-occurs} allows us to separate the main term and the error term
\[
    \mathcal{M}_k(x) \ge \Pi^2 \sum_{\substack{m \le M, \, \mu^2(m)=1 \\ \ell \mid m \implies \ell \equiv 1 \mod 4}} \frac{(2^k-2)^{\omega(m)}}{\varphi(m)} + O\Bigg( \Pi \sum_{\substack{m \le M, \, \mu^2(m)=1 \\ \ell \mid m \implies \ell \equiv 1 \mod 4}} (2^k-2)^{\omega(m)}E(m) \Bigg).
\]

We evaluate the main term first. Applying Lemma \ref{lem:euler-product} with the constant $C = 2^k-2$, and noting that $\log M \asymp \log P \asymp \log x$, we deduce
\[
    \sum_{\substack{m \le M, \, \mu^2(m)=1 \\ \ell \mid m \implies \ell \equiv 1 \mod 4}} \frac{(2^k-2)^{\omega(m)}}{\varphi(m)} \asymp_k (\log M)^{(2^k-2)/2} \asymp_k (\log x)^{2^{k-1}-1}.
\]
By the Prime Number Theorem, $\Pi = \#(\mathcal{P} \cap I) \asymp P/\log P$. Consequently, the main term is bounded below by
\[
    \gg_k \left(\frac{P}{\log P}\right)^2 (\log x)^{2^{k-1}-1} \asymp_k x (\log x)^{2^{k-1}-3}.
\]

It remains to bound the error term. We can choose a sufficiently large constant $B = B_k > 0$ such that $(2^k-2)^{\omega(m)} \le \tau(m)^B$ for all $m$. By Lemma \ref{lem:weighted B-V}, we may choose large $A > 3$ such that the error term becomes
\[
    \Pi \sum_{\substack{m \le M, \, \mu^2(m)=1 \\ \ell \mid m \implies \ell \equiv 1 \mod 4}} (2^k-2)^{\omega(m)}E(m)
    \ll_{A,k} \Pi \frac{P}{(\log P)^A}
    \ll_{A,k} \frac{x}{(\log x)^{A+1}}.
\]
which is negligible.
This establishes \eqref{eq:Mixed low bound} and completes the proof of the lower bound in Theorem \ref{thm:main}.

\section{The lower bound for the shifted-prime divisor function}
\label{sec:omega-star-lower}

Recall that $\omega^*(n):=\#\{p\in\mathcal P:p-1\mid n\}$. In this section we give a sketch proof that, for each fixed integer $k\geq 2$,
\begin{equation}\label{eq:omega*low}
    \sum_{n\le x}\omega^*(n)^k
    \gg_k
    x(\log x)^{2^k-k-1}.
\end{equation}

To apply our lower-bound strategy, we choose the auxiliary function $\tau(n)$ here.
By the standard divisor moment bound $\sum_{n\le x}\tau(n)^k \ll_k x(\log x)^{2^k-1}$ and Hölder's inequality again, it suffices to prove the mixed moment lower bound
\begin{equation}\label{eq:omega-star-mixed}
    \sum_{n\le x}\omega^*(n)\tau(n)^{k-1}
    \gg_k
    x(\log x)^{2^k-2}.
\end{equation}

To estimate the left-hand side, we begin with the inequality
$$\tau(n)^{s} \ge \sum_{m\mid n, \mu^2(m)=1} (2^{s}-1)^{\omega(m)}.$$
Indeed, this is immediate by noting that both sides are multiplicative. Plugging this into \eqref{eq:omega-star-mixed} and
swapping the order of summation, we obtain
\begin{equation}\label{eq:sum-swap}
\begin{aligned}
    \sum_{n\le x}\omega^*(n)\tau(n)^{k-1}
    &\ge
    \sum_{p\le x^{1/4}}
    \sum_{\substack{m\le x^{1/4}\\ \mu^2(m)=1}}
        (2^{k-1}-1)^{\omega(m)}
        \left\lfloor\frac{x}{[p-1,m]}\right\rfloor \\
    &\gg
    x \sum_{p\le x^{1/4}}\frac1{p-1}
    \sum_{\substack{m\le x^{1/4}\\ \mu^2(m)=1}}(2^{k-1}-1)^{\omega(m)}
        \frac{(p-1,m)}{m}.
\end{aligned}
\end{equation}
Now we use the identity $(p-1,m)=\sum_{c\mid(p-1,m)}\varphi(c)$. Writing $m = bc$ and restricting to squarefree $c \le x^{1/8}$, the inner sum over $m$ in \eqref{eq:sum-swap} is bounded below by
\begin{equation}\label{eq:m-sum}
    \sum_{\substack{c\mid p-1\\ \mu^2(c)=1\\ c\le x^{1/8}}}
        \frac{\varphi(c)(2^{k-1}-1)^{\omega(c)}}{c}
        \sum_{\substack{b\le x^{1/4}/c\\ \mu^2(b)=1\\ (b,c)=1}}
        \frac{(2^{k-1}-1)^{\omega(b)}}{b}.
\end{equation}

To estimate the inner sum over \(b\) in \eqref{eq:m-sum}, we apply the Selberg--Delange method here again, just like in the proof of Lemma \ref{lem:euler-product}.  Since \(c\le x^{1/8}\), we have
\(
    \log(x^{1/4}/c)\asymp \log x.
\)
Thus, uniformly for such \(c\),
\begin{equation}\label{eq:b-sum-eval}
    \sum_{\substack{b\le x^{1/4}/c\\ \mu^2(b)=1\\ (b,c)=1}}
        \frac{(2^{k-1}-1)^{\omega(b)}}{b}
    \gg_k
    (\log x)^{2^{k-1}-1}
    \prod_{\ell\mid c}
    \left(1+\frac{2^{k-1}-1}{\ell}\right)^{-1}.
\end{equation}
Substituting \eqref{eq:b-sum-eval} into \eqref{eq:m-sum}. 
We define \(g_k\) to be the non-negative multiplicative function supported on squarefree integers and
given on primes by
\begin{equation}\label{eq:gk-def}
    g_k(\ell)
    =
    \frac{(2^{k-1}-1)(1-1/\ell)}
         {1+(2^{k-1}-1)/\ell}
\end{equation}
so that 
\[    \frac{\varphi(c)}{c}     (2^{k-1}-1)^{\omega(c)} \prod_{\ell\mid c}     \left(1+\frac{2^{k-1}-1}{\ell}\right)^{-1}     =     g_k(c),\]
since \(c\) is squarefree. Then the contribution of the \(m\)-sum in \eqref{eq:sum-swap} is
\begin{equation}\label{eq:m-sum-simplified}
    \gg_k
    (\log x)^{2^{k-1}-1}
    \sum_{\substack{c\mid p-1\\ c\le x^{1/8}}} g_k(c).
\end{equation}

Inserting this back into \eqref{eq:sum-swap} and restricting the inner sum further to $c \le p^{1/3}$, we get
\begin{equation*}
    \sum_{n\le x}\omega^*(n)\tau(n)^{k-1}
    \gg_k
    x(\log x)^{2^{k-1}-1}
    \sum_{p\le x^{1/4}}
    \frac1{p-1}
    \sum_{\substack{c\mid p-1\\ c\le p^{1/3}}}
        g_k(c).
\end{equation*}
Comparing this with \eqref{eq:omega-star-mixed}, it remains to show
\begin{equation}\label{eq:final-lower}
S:=\sum_{p\le x^{1/4}}
    \frac1{p-1}
    \sum_{\substack{c\mid p-1\\ c\le p^{1/3}}}
        g_k(c)\gg_k x (\log x)^{2^{k-1}-1}
\end{equation}

Since all terms are non-negative, we may restrict to \(c\le x^{1/24}\) and
\(x^{1/8}<p\le x^{1/4}\).  Then \(c^3\le x^{1/8}<p\), so the condition
\(c\le p^{1/3}\) is automatic.  Hence
\[
\begin{aligned}
    S
    &\ge
    \sum_{\substack{c\le x^{1/24}\\ \mu^2(c)=1}}
    g_k(c)
    \sum_{\substack{x^{1/8}<p\le x^{1/4}\\ p\equiv1\mod c}}
    \frac1{p-1}.
\end{aligned}
\]
The weights \(g_k(c)\) are non-negative and divisor-bounded; say
\(    g_k(c)\ll_k \tau(c)^{B_k}\)
for some constant \(B_k\).  We decompose the prime range into dyadic intervals
\(I=(P,2P]\), with \(    x^{1/8}\le P\le x^{1/4}.\)
For such \(P\), we have \(x^{1/24}\le P^{1/3}\).  Hence, applying
Lemma~\ref{lem:weighted B-V} with \(B=B_k\) and \(a=1\), we obtain, for any
fixed \(A>0\),
\[
\begin{aligned}
    \sum_{\substack{c\le x^{1/24}\\ \mu^2(c)=1}}
    g_k(c)\pi_I(1;c)
    &=
    \Pi
    \sum_{\substack{c\le x^{1/24}\\ \mu^2(c)=1}}
    \frac{g_k(c)}{\varphi(c)}
    +
    O_{k,A}\left(\frac{P}{(\log P)^A}\right),
\end{aligned}
\]
where \(\Pi=\#\{p\in I\}\).  Since  \(p\in I=(P,2P]\) , it follows that
\[
\begin{aligned}
\sum_{\substack{c\le x^{1/24}\\ \mu^2(c)=1}}
g_k(c)
\sum_{\substack{p\in I\\ p\equiv1\mod c}}
\frac1{p-1}  &\ge
\frac1{2P}
\sum_{\substack{c\le x^{1/24}\\ \mu^2(c)=1}}
g_k(c)\pi_I(1;c) \\
&=
\frac{\Pi}{2P}
\sum_{\substack{c\le x^{1/24}\\ \mu^2(c)=1}}
\frac{g_k(c)}{\varphi(c)}
+
O_{k,A}\left(\frac1{(\log P)^A}\right).
\end{aligned}
\]
Moreover, since ${\Pi}/{P}\asymp\sum_{p\in (P,2P]} 1/p$, summing over all dyadic intervals \(I=(P,2P]\subset (x^{1/8},x^{1/4}]\) gives
\[
\begin{aligned}
S
&\gg_k
\left(\sum_{x^{1/8}<p\le x^{1/4}}\frac1p\right)
\sum_{\substack{c\le x^{1/24}\\ \mu^2(c)=1}}
\frac{g_k(c)}{\varphi(c)}
+
O_{k,A}\left((\log x)^{-A}\right)
\gg_k 
\sum_{\substack{c\le x^{1/24}\\ \mu^2(c)=1}}
\frac{g_k(c)}{\varphi(c)}.
\end{aligned}
\]
In the last step, we used Mertens' theorem and the fact that $g_k(c)/\varphi(c)$ is non-negative.

It remains to estimate the last multiplicative sum.   By the definition of \(g_k\), for $\ell$ prime,
\[
    \frac{g_k(\ell)}{\varphi(\ell)}
    =
    \frac{2^{k-1}-1}{\ell+2^{k-1}-1}
    =
    \frac{2^{k-1}-1}{\ell}+O_k\!\left(\frac1{\ell^2}\right).
\]
Using the standard Selberg--Delange estimate again we have $S\gg_k (\log x)^{2^{k-1}-1}$.
This proves \eqref{eq:final-lower}, which implies \eqref{eq:omega-star-mixed} and hence \eqref{eq:omega*low}.


\medskip


\bibliographystyle{amsalpha}
\bibliography{refs}

@article{Hensley1986,
  author  = {Hensley, Doug},
  title   = {The convolution powers of the {Dickman} function},
  journal = {Journal of the London Mathematical Society (2)},
  volume  = {33},
  number  = {3},
  year    = {1986},
  pages   = {395--406},
  doi     = {10.1112/jlms/s2-33.3.395}
}

@article{BlomerBrudern2008,
  author  = {Blomer, Valentin and Br{\"u}dern, J{\"o}rg},
  title   = {Prime paucity for sums of two squares},
  journal = {Bulletin of the London Mathematical Society},
  volume  = {40},
  number  = {3},
  year    = {2008},
  pages   = {457--462},
  doi     = {10.1112/blms/bdn026}
}

@article{BlomerGranville2006,
  author  = {Blomer, Valentin and Granville, Andrew},
  title   = {Estimates for representation numbers of quadratic forms},
  journal = {Duke Mathematical Journal},
  volume  = {135},
  number  = {2},
  year    = {2006},
  pages   = {261--302},
  doi     = {10.1215/S0012-7094-06-13522-6}
}

@article{Daniel2001,
  author  = {Daniel, Stephan},
  title   = {On the sum of a square and a square of a prime},
  journal = {Mathematical Proceedings of the Cambridge Philosophical Society},
  volume  = {131},
  number  = {1},
  year    = {2001},
  pages   = {1--22},
  doi     = {10.1017/S0305004101005163}
}

@article{Erdos1938,
  author  = {Erd{\H{o}}s, Paul},
  title   = {On additive properties of squares of primes. {I}},
  journal = {Nederl. Akad. Wetensch., Proc.},
  volume  = {41},
  year    = {1938},
  pages   = {37--41}
}

@article{FanPomerance2024,
  author = {Fan, Kai (Steve) and Pomerance, Carl},
  title  = {Shifted-prime divisors},
  year   = {2024},
  note   = {\url{https://arxiv.org/abs/2401.10427}}
}

@article{Gabdullin2025,
  author = {Gabdullin, Mikhail R.},
  title  = {Moments of the shifted prime divisor function},
  year   = {2025},
  note   = {\url{https://arxiv.org/abs/2505.24050}}
}

@article{GreenSawhney2026,
  author = {Green, Ben and Sawhney, Mehtaab},
  title  = {The proportion of permutations fixing a {$k$}-set},
  year   = {2026},
  note   = {\url{https://arxiv.org/abs/2604.28116}}
}

@article{GranvilleSedunovaSabuncu2024,
  author  = {Granville, Andrew and Sedunova, Alisa and Sabuncu, Cihan},
  title   = {The multiplication table constant and sums of two squares},
  journal = {Acta Arithmetica},
  volume  = {214},
  year    = {2024},
  pages   = {499--522},
  doi     = {10.4064/aa230828-19-4}
}

@article{HildebrandTenenbaum1993,
  author  = {Hildebrand, Adolf and Tenenbaum, G{\'e}rald},
  title   = {On a class of differential-difference equations arising in number theory},
  journal = {Journal d'Analyse Math{\'e}matique},
  volume  = {61},
  year    = {1993},
  pages   = {145--179}
}

@article{Koukoulopoulos2013,
  author  = {Koukoulopoulos, Dimitris},
  title   = {Pretentious multiplicative functions and the prime number theorem for arithmetic progressions},
  journal = {Compositio Mathematica},
  volume  = {149},
  number  = {7},
  year    = {2013},
  pages   = {1129--1149},
  doi     = {10.1112/S0010437X12000802}
}

@article{Rieger1968,
  author  = {Rieger, G. J.},
  title   = {{\"U}ber die {Summe} aus einem {Quadrat} und einem {Primzahlquadrat}},
  journal = {Journal f{\"u}r die reine und angewandte Mathematik},
  volume  = {231},
  year    = {1968},
  pages   = {89--100}
}

@article{Sabuncu2024,
  author  = {Sabuncu, Cihan},
  title   = {On the moments of the number of representations as sums of two prime squares},
  journal = {International Mathematics Research Notices},
  year    = {2024},
  number  = {11},
  pages   = {9411--9439},
  doi     = {10.1093/imrn/rnae044}
}

@article{Sedunova2022,
  author  = {Sedunova, Alisa},
  title   = {Intersections of binary quadratic forms in primes and the paucity phenomenon},
  journal = {Journal of Number Theory},
  volume  = {235},
  year    = {2022},
  pages   = {305--327},
  doi     = {10.1016/j.jnt.2021.06.035}
}

@article{Smida1991,
  author  = {Smida, Hikma},
  title   = {Sur les puissances de convolution de la fonction de {Dickman}},
  journal = {Acta Arithmetica},
  volume  = {59},
  number  = {2},
  year    = {1991},
  pages   = {123--143}
}

@book{Tenenbaum2015,
  author    = {Tenenbaum, G{\'e}rald},
  title     = {Introduction to Analytic and Probabilistic Number Theory},
  edition   = {3rd},
  series    = {Graduate Studies in Mathematics},
  volume    = {163},
  publisher = {American Mathematical Society},
  address   = {Providence, RI},
  year      = {2015}
}

@article{TenenbaumWu2003,
  author  = {Tenenbaum, G{\'e}rald and Wu, Jie},
  title   = {Moyennes de certaines fonctions multiplicatives sur les entiers friables},
  journal = {Journal f{\"u}r die reine und angewandte Mathematik},
  volume  = {564},
  year    = {2003},
  pages   = {119--166}
}

\end{document}